\documentclass[12pt]{amsart}
\usepackage{bbm}
\usepackage{mathrsfs}
\usepackage{}
\usepackage{amsfonts}
\usepackage{amssymb}
\usepackage{amsmath}
\usepackage{float}
\allowdisplaybreaks[4]

\newcommand{\uS}{\mathbb{S}^{n}}

\newtheorem{theorem}{Theorem}[section]
\newtheorem{lemma}[theorem]{Lemma}
\newtheorem{proposition}[theorem]{Proposition}

 \theoremstyle{definition}
\newtheorem{definition}[theorem]{Definition}

\theoremstyle{remark}
\newtheorem{remark}[theorem]{Remark}

\numberwithin{equation}{section}

\begin{document}

\title[$L_p$ dual Christoffel-Minkowski type problem]
{The $L_p$ dual Christoffel-Minkowski type problem for a class of Hessian quotient equations}

\author{Shasha Luo}
\address{Faculty of Mathematics and Statistics, Hubei Key Laboratory of Applied Mathematics, Hubei University,  Wuhan 430062, P.R. China}
\email{202421104011295@stu.hubu.edu.cn}

\author{Jiabao Gong}
\address{Faculty of Mathematics and Statistics, Hubei Key Laboratory of Applied Mathematics, Hubei University,  Wuhan 430062, P.R. China}
\email{202321104011284@stu.hubu.edu.cn}

\author{Qiang Tu$^{\ast}$}
\address{Faculty of Mathematics and Statistics, Hubei Key Laboratory of Applied Mathematics, Hubei University,  Wuhan 430062, P.R. China}
\email{qiangtu@hubu.edu.cn}

\keywords{$L_p$ dual Minkowski type problem; full rank theorem; a priori estimates}

\subjclass[2010]{Primary 35J15; Secondary 35B45.}
\thanks{$\ast$ Corresponding author}

\begin{abstract}
 In this paper, we investigate an $L_p$ dual Christoffel-Minkowski type problem for the Hessian quotient operator $\frac{\sigma_{k}(\Lambda)}{\sigma_{l}(\Lambda)}$, where the operator $\Lambda$ has been widely studied in the literature. Exploiting the recently discovered ``inverse convexity'' property of this class of operators, we establish a full rank theorem under suitable structural assumptions. Together with a priori estimates, this result enables us to prove the existence and uniqueness of strictly spherically convex solutions to the above $L_p$ dual Christoffel-Minkowski type problem.

\end{abstract}

\maketitle
%%%%%%%%%%%%%%%%%%%%%%%%%%%%%%%%%%%
\section{Introduction}

%The classical $L_p$ Brunn-Minkowski theory is the classical core of the geometry of convex bodies.   In [46],  Lutwak, Yang and Zhang [46] constructed a unifying family of geometric measures, the $(p, q)$-th dual curvature measures, which include $L_p$ surface area measure and the $L_p$ integral curvature in  $L_p$ Brunn-Minkowski theory.  This new family of geometric measures has significantly expanded the classical theory, leading to the development of the dual $L_p$ Brunn-Minkowski theory.

The $L_p$ dual Minkowski problem is one of the central problems in the $L_p$  dual Brunn-Minkowski theory. As an extension of the $L_p$ dual Minkowski problem, the $L_p$ dual Christoffel-Minkowski problem involves finding a convex body whose support function satisfies the following fully nonlinear partial differential equation
\begin{equation}\label{12-19}
\begin{split}
\sigma_{k}(\nabla^2 u +u I)=u^{p-1}(u^{2}+|\nabla u|^{2})^{\frac{k+1-q}{2}}\varphi, \quad \mbox{on}~\uS,
\end{split}
\end{equation}
where $\sigma_{k}$ is the $k$-th elementary symmetric polynomial, $u$ is the support function of the convex body, $\nabla^2 u$ is the Hessian matrix of $u$ with respect to a local orthonormal frame on $\mathbb{S}^{n}$, $I$ is the identity matrix and $\varphi$ is a smooth positive function on $\mathbb{S}^{n}$.

When $p=1$ and $q=k+1$, equation \eqref{12-19} corresponds to the Christoffel-Minkowski problem of prescribing the $k$-th area measure, which has attracted much attention. For $k=1$, equation \eqref{12-19} is the classical
Christoffel problem,  which was first posed by Christoffel \cite{Cho-65} and subsequently further developed by Berg \cite{Ber-69} and Firey \cite{Fir-67, Fir-68}. In the case $k = n$, equation \eqref{12-19} corresponds to the classical Minkowski problem, which has been extensively studied and solved in works by Minkowski \cite{Min-97, Min-03}, Alexandrov \cite{Ale-38, Ale-39},  Nirenberg \cite{Nir-53},  Cheng-Yau \cite{Ch-Yau-76}, Pogorelov \cite{Pog-78} and others. In the intermediate case $1 < k < n$, the Christoffel-Minkowski problem has been extensively examined, with significant contributions from Firey \cite{Fir-70}, Guan-Ma \cite{GM-2003}, Guan-Lin-Ma \cite{GLM-2006}, Guan-Ma-Zhou \cite{GMZ-2006}, and Bryan-Ivaki-Scheuer \cite{BIS-2023}.

When $q=k+1$, equation \eqref{12-19} reduces to the $L_p$ Christoffel-Minkowski problem. For $k=n$, equation \eqref{12-19} corresponds to the $Lp$ Minkowski problem, which was first  introduced by Lutwak \cite{Lut-1993} and has since been extensively studied. One can refer to \cite{Bianchi2019,Boreczky2013,Bereczky2017,Chen2017,Chou2006,Lu2013,Lutwak2004,Zhu2014,Zhu2015a,Zhu2015b} . For $1 < k < n$, the $L_p$ Christoffel-Minkowski problem has been studied by Hu-Ma-Shen \cite{HMS-04} for the case $p > k + 1$ (see also Ivaki \cite{IVAKI-2019} and Sheng-Yi \cite{ShC-2020}), while Guan-Xia \cite{GX-18} studied the case $1 < p < k + 1$.

Note that the $L_{p}$ dual Christoffel-Minkowski problem \eqref{12-19} includes several of the above problems as special cases. For $k=n$, equation \eqref{12-19} corresponds to the $L_p$ dual Minkowski problem of prescribing
the $(p, q)$-th dual curvature measure, first introduced by Lutwak-Yang-Zhang \cite{LYZ-2018-1}. The existence of solutions has been extensively investigated. See, e.g., Huang-Zhao \cite{HZ-2018}, B\"or\"oczky-Fodor \cite{BF-2019}, Chen-Huang-Zhao \cite{CHZ-2019} and Chen-Li \cite{Chen-Li-2021}. For the general case $1 \leq k < n$, several results have been established. See, e.g., Ding-Li \cite{DL-2023}, Chen-Tu-Xiang \cite{CTX-25} and Cabezas-Moren-Hu\cite{CH-25}.

Inspired by the above pioneering works, we naturally consider a much wider class of $L_p$ Minkowski type problems. Specifically, let $u$ be a smooth positive function on  $\uS$, and let $\lambda=(\lambda_{1},\lambda_{2},\cdot\cdot\cdot,\lambda_{n})$ be the eigenvalues of the symmetric matrix $\nabla^{2}u+uI$. Given an integer $\mathscr{P}$ with $1\leq \mathscr{P}\leq n-1$, set
\begin{equation*}
\begin{split}
\mathfrak{I}=\{(i_{1},\cdot\cdot\cdot,i_{\mathscr{P}})|1\leq i_{1}<\cdot\cdot\cdot<i_{\mathscr{P}}\leq n\},
\end{split}
\end{equation*}
and for $\mathbf{I}=(i_{1},\cdot\cdot\cdot,i_{\mathscr{P}})\in\mathfrak{I}$, define
\begin{equation*}
\begin{split}
\Lambda_{\mathbf{I}}(\lambda)=\sum_{i_{j}\in \mathbf{I}}\lambda_{i_j}=\lambda_{i_{1}}+\cdot\cdot\cdot+\lambda_{i_{\mathscr{P}}}.
\end{split}
\end{equation*}
For convenience, fix an order for the elements in $\mathfrak{I}$:
\begin{equation*}
\begin{split}
\mathbf{I}_{1},\cdot\cdot\cdot,\mathbf{I}_{N},\quad \mbox{where} \quad N=C_{n}^{\mathscr{P}}=\frac{n!}{\mathscr{P}!(n-\mathscr{P})!}.
\end{split}
\end{equation*}
Then the $\mathscr{P}$-eigenvalues of $\nabla^{2}u+uI$ are defined as
\begin{equation}\label{8987}
\begin{split}
\Lambda(\nabla^{2}u+uI)=(\Lambda_{\mathbf{I}_{1}}(\lambda),\cdot\cdot\cdot,\Lambda_{\mathbf{I}_{N}}(\lambda)).
\end{split}
\end{equation}
In this paper, we consider the following Hessian quotient type equation
\begin{equation}\label{1123}
\begin{split}
\frac{\sigma_{k}(\Lambda(\nabla^{2}u+uI))}{\sigma_{l}(\Lambda(\nabla^{2}u+uI))}=u^{p-1}(u^{2}+|\nabla u|^{2})^{\frac{k+1-q}{2}}\varphi,  \quad \mbox{on}~\uS,
\end{split}
\end{equation}
where $0\leq l <k \leq N$ are integers and $p, q$ are constants. It is clear that the equation \eqref{1123} corresponds to the $L_p$ dual Christoffel-Minkowski problem \eqref{12-19} when $l=0$ and $\mathscr{P}=1$. When $\mathscr{P}=n-1$ and $q=k+1$, equation \eqref{1123} becomes the following mixed Hessian quotient equation
\begin{equation*}\label{1123-n-1}
\begin{split}
\frac{\sigma_{k}(\eta)}{\sigma_{l}(\eta)}=u^{p-1}\varphi,
\end{split}
\end{equation*}
where $\eta=(\eta_1, \eta_2, \cdots, \eta_n)$ with $\eta_i= \sum_{j \neq i} \lambda_j$, which was introduced and  solved by Chen-Xu \cite{CX-22, CX-23} for $p\geq k-l+1$ or $-(k-l)<p-1<0$.

It is worth noting that the operators $\Lambda$ in \eqref{8987} are quite natural and have appeared frequently in the literature. A $C^2$ function $u$ is called $\mathscr{P}$-plurisubharmonic if the $\mathscr{P}$-eigenvalues of the  matrix $\nabla^2 u$ are nonnegative. Such functions were first introduced by Harvey-Lawson \cite{HL-13}. Dinew \cite{D-23} then studied a Monge-Amp\`ere type equation
\begin{equation*}\label{1123-p-n}
\begin{split}
\mathcal{M}_{\mathscr{P}}(\nabla^2 u):=\det(\Lambda(\nabla^2 u))=f,
\end{split}
\end{equation*}
 in the class of $\mathscr{P}$-plurisubharmonic functions and established a priori estimates and a Liouville-type theorem. The corresponding curvature equations were further investigated by Dong \cite{Dong-23, Dong-24}. When $\mathscr{P}=n-1$, the operators $\Lambda$ also arise naturally in connection with the Gauduchon conjecture in complex geometry. More precisely, Tosatti-Weinkove \cite{TW-17} studied the complex Monge-Amp\`ere equation
 \begin{equation}\label{32161}
\begin{split}
\det \left(\left(\sum_{i=1}^n \frac{\partial^2 u}{\partial z_i \partial \overline{z}_i}\right)\delta_{ij}-\frac{\partial^2 u}{\partial z_i \partial \overline{z}_j}\right)=f,\end{split}
\end{equation}
 for $(n-1)$-plurisubharmonic functions on  compact K\"ahler manifold. Equation \eqref{32161} is also related to the Gauduchon conjecture, which was solved by Sz\'ekelyhidi-Tosatti-Weinkove in \cite{STW-17}, where the existence of Gauduchon metrics with prescribed volume form on any compact complex manifold was established. One can consult \cite{G-84, GN-21}  for further related works. For some other recent developments of fully nonlinear equations related to the operators $\Lambda$ see \cite{CDH, CTX-20, CTX3, CTX-21, CJ-21, GZT-25, Z-24} and the references therein.

  %the complex geometry. For instance, Tosatti-Weinkove the  consider Monge-Amp\`ere type euquation for $(n-1)$-plurisubharmonic functions on a compact Kahler Manifold. In , Székelyhidi-Tosatti-Weinkove  solves the Gauduchon  conjecture by finding Gauduchon metrics with prescribed volume form on  any compact complex manifold.

%Note that the operators $\frac{\sigma_{k}(\Lambda)}{\sigma_{l}(\Lambda)}$ are very natural and appeared often in many literatures. If $A=D^2 u$, Harvey and Lawson introduced $\mathscr{P}$-plurisubharmonic  function related to the operator $\Lambda$, then Dinew study a Monge-Amp\`ere type equation ($k=n, l=0$) in the class of $\mathscr{P}$-plurisubharmonic functions and establish a priori estimates and a Liouville type theorem.

%It is well known that the support function $u$ of a strictly convex body must be convex on $\uS$. So, for any $x\in \uS$, the Hessian matrix $\Delta^2 u+uI>0$. A natural problem is whether we can find convex solutions to \eqref{1123}. Here we try to discuss this problem. With the help of a new phenomenon of “inverse convexity” with respect to the operator $\frac{\sigma_{k}(\Lambda)}{\sigma_{l}(\Lambda)}$, we can set up the Full Rank Theorem. Then, we show the existence of the convexsolutions by the method of continuity.

The support function $u$ of a strictly convex body is strictly spherically convex, i.e. the Hessian matrix  $\nabla^2 u+uI$ is positive definite. A natural question arises as to whether strictly spherically convex solutions to equation \eqref{1123} exist. In this work, we investigate this problem. By leveraging a newly observed phenomenon of ``inverse convexity'' associated with the operator $\frac{\sigma_{k}(\Lambda)}{\sigma_{l}(\Lambda)}$ (see Proposition \ref{L1}), we establish the Full Rank Theorem. Subsequently, the existence of strictly spherically convex solutions is demonstrated via the method of continuity.

\begin {theorem}\label{L6}
Let $0\leq l< k\leq C_{n-1}^{\mathscr{P}-1}$, $n\geq 2$, and $\mathscr{P}>1$. Let $\varphi(x)$ be a smooth positive function on $\mathbb{S}^{n}$. If $p, q$ and $\varphi$ satisfy one of the following conditions:
\begin{enumerate}
\item[(\romannumeral1)] $p\geq 1> q-l$, $(\varphi ^{-\frac{1}{k-l+p-1}})_{ii}+\varphi ^{-\frac{1}{k-l+p-1}}\geq0$;

\item[(\romannumeral2)] $p>q-l\geq1$, $q\leq k+1$, $(\varphi ^{-\frac{1}{k-l+p-1}})_{ii}+\varphi ^{-\frac{1}{k-l+p-1}}\geq0$;

\item[(\romannumeral3)] $p>q-l$, $q>k+1$, $(\varphi ^{-\frac{1}{k-l+p-1}})_{ii}+\frac{2k+p-l-1}{k-l+p-1}\varphi ^{-\frac{1}{k-l+p-1}}\geq0$;
\end{enumerate}
then equation \eqref{1123} admits a unique positive strictly spherically convex solution $u$.
\end {theorem}

\begin{remark}
It is worth noting that the Hessian quotient operator $\frac{\sigma_{k}(\Lambda)}{\sigma_{l}(\Lambda)}$ when $k\leq C_{n-1}^{\mathscr{P}-1}$ turns out to satisfy a very strong property (see Proposition \ref{P3}).  This property enables us to deal with the third order derivatives in  $C^2$ estimates.
\end{remark}

Then we consider the equation \eqref{1123} in the homogeneous case $p=q-l$.

\begin {theorem}\label{L7}
Let $0\leq l< k\leq C_{n-1}^{\mathscr{P}-1}$, $n\geq 2$, $\mathscr{P}>1$, and $p=q-l>1$. Let $\varphi(x)$ be a smooth positive function on $\mathbb{S}^{n}$. If $q$ and $\varphi$ satisfy one of the following conditions:
\begin{enumerate}
\item[(\romannumeral1)] $l+1<q \leq k+1$, $(\varphi ^{-\frac{1}{k-l+p-1}})_{ii}+\varphi ^{-\frac{1}{k-l+p-1}}\geq0$;

\item[(\romannumeral2)] $q>k+1$, $(\varphi ^{-\frac{1}{k-l+p-1}})_{ii}+\frac{2k+p-l-1}{k-l+p-1}\varphi ^{-\frac{1}{k-l+p-1}}\geq0$;
\end{enumerate}
then there exists a unique positive constant $\gamma$ such that the equation
\begin{equation*}\label{32158}
\begin{split}
\frac{\sigma_{k}(\Lambda(\nabla^{2}u+uI))}{\sigma_{l}(\Lambda(\nabla^{2}u+uI))}=u^{p-1}(u^{2}+|\nabla u|^{2})^{\frac{k+1-q}{2}}\gamma\varphi(x),\,\,\,\mbox{on}\,\,\,\mathbb{S}^{n},
\end{split}
\end{equation*}
admits a unique positive strictly spherically convex solution $u$ up to dilations.
\end {theorem}

In particular, we consider the equation \eqref{1123} in the special case $q=k+1$.
\begin{theorem}\label{L8}
Let $0\leq l< k\leq N$, $n\geq 2$ and $\mathscr{P}>1$. Let $\varphi(x)$ be a smooth positive function on $\mathbb{S}^{n}$. Suppose that $(\varphi ^{-\frac{1}{k-l+p-1}})_{ii}+\varphi ^{-\frac{1}{k-l+p-1}}\geq0$. Then
\begin{enumerate}
\item[(\romannumeral1)] if $p>k-l+1$, then there exists a unique positive strictly spherically convex solution $u$ to the equation
\begin{equation}\label{32150}
\begin{split}
\frac{\sigma_{k}(\Lambda(\nabla^{2}u+uI))}{\sigma_{l}(\Lambda(\nabla^{2}u+uI))}=u^{p-1}\varphi(x),\,\,\,\mbox{on}\,\,\,\mathbb{S}^{n};
\end{split}
\end{equation}
\item[(\romannumeral2)] if $p=k-l+1$, then there exists a unique positive constant $\gamma$ such that the equation
\begin{equation}\label{32157}
\begin{split}
\frac{\sigma_{k}(\Lambda(\nabla^{2}u+uI))}{\sigma_{l}(\Lambda(\nabla^{2}u+uI))}=u^{p-1}\gamma\varphi(x),\,\,\,\mbox{on}\,\,\,\mathbb{S}^{n},
\end{split}
\end{equation}
\end{enumerate}
 admits a unique positive strictly spherically convex solution $u$ up to dilations.
\end{theorem}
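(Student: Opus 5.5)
Theorem \ref{L8} is the case $q=k+1$, where the gradient factor disappears. I would prove it by the continuity method combined with the full rank theorem, following the scheme behind Theorems \ref{L6} and \ref{L7}. The main point is that for $q=k+1$ the $C^2$ estimate should not need the structural property of Proposition \ref{P3}. That property is what forces $k\le C_{n-1}^{\mathscr{P}-1}$, so avoiding it lets us allow the full range $k\le N$.

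\textbf{Set-up.} For $t\in[0,1]$ set $\varphi_t=\bigl((1-t)+t\varphi^{-\frac{1}{k-l+p-1}}\bigr)^{-(k-l+p-1)}$. This is a convex combination of $1$ and $\varphi^{-1/(k-l+p-1)}$. Both satisfy $f_{ii}+f\ge 0$, so the structural hypothesis persists along the path. At $t=0$ the constant $u\equiv c$ solves the equation, and since $\sigma_k(\Lambda(cI))/\sigma_l(\Lambda(cI))=C_N^k(\mathscr{P}c)^{k}/(C_N^l(\mathscr{P}c)^{l})$, the constant $c$ is determined by the equation.

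\textbf{Openness.} I would linearize at a strictly spherically convex solution. The linearized operator is $L v = F^{ij}(v_{ij}+v\delta_{ij})-(p-1)\frac{F}{u}v$, where $F=\sigma_k(\Lambda)/\sigma_l(\Lambda)$ is homogeneous of degree $k-l$.
\begin{itemize}
\item For $p>k-l+1$, I would show $L$ is injective by the maximum principle applied to $v/u$, using homogeneity and the comparison $p-1>k-l$. Since $L$ is self-adjoint up to lower order terms and Fredholm of index zero, injectivity gives invertibility.
\item For $p=k-l+1$, the equation is scaling invariant. The kernel is spanned by $u$ itself (dilations), and $\gamma$ is an additional unknown. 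Openness then follows from the implicit function theorem on the quotient space, as in the standard $L_{k+1}$ (log-type) Christoffel--Minkowski arguments.
\end{itemize}

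\textbf{A priori estimates.} The $C^0$ bounds come from the maximum principle.
\begin{itemize}
\item For $p>k-l+1$: at a maximum point of $u$ we have $\nabla^2u+uI\le uI$, so $F\le C u^{k-l}$. This gives $u^{p-1-(k-l)}\le C/\min\varphi$, an upper bound; the minimum point similarly gives a lower bound.
\item For $p=k-l+1$: normalize $\max u=1$. Then $\gamma$ is pinched by comparing the equation at the max and min points. A lower bound on $u$ follows from a Harnack-type estimate for the convex body: for support functions, $\max u/\min u$ is controlled by the width and inradius, which in turn are controlled by the curvature quotient bounds.
\end{itemize}
The $C^1$ bound is automatic from convexity, since $|\nabla u|^2+u^2\le(\max u)^2$.

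For $C^2$, the key observation is that $\operatorname{tr}(\nabla^2u+uI)=\Delta u+nu$ controls all eigenvalues once convexity is known. I would take the maximum of $H=\Delta u+nu$ and use the concavity of $F^{1/(k-l)}$ together with the commutator formulas on $\mathbb{S}^n$. With the gradient term absent, the right-hand side $\log(u^{p-1}\varphi)$ has no third-order dependence, so the concavity term can simply be discarded and no finer structure is needed. This is where $q=k+1$ buys the full range $k\le N$.

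The Evans--Krylov theorem and Schauder theory then give higher regularity. Positive definiteness is preserved by the full rank theorem: the assumption $(\varphi^{-1/(k-l+p-1)})_{ii}+\varphi^{-1/(k-l+p-1)}\ge0$ is exactly the condition of the constant rank argument built on the inverse convexity of Proposition \ref{L1}. A deformation argument then shows that strict spherical convexity is never lost along the path, so the set of solvable $t$ is also closed.

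\textbf{Uniqueness.} I would compare two solutions $u_1,u_2$ at the maximum of $u_1/u_2$ (or of $\log u_1-\log u_2$). At that point $\nabla^2 u_1+u_1I\le \frac{u_1}{u_2}(\nabla^2u_2+u_2I)$, and homogeneity gives $(u_1/u_2)^{p-1-(k-l)}\le 1$. Applying the same argument at the minimum yields $u_1\equiv u_2$ when $p>k-l+1$; when $p=k-l+1$ it gives $u_1/u_2\equiv$ const and uniqueness of $\gamma$.

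\textbf{Main obstacle.} I expect the $C^0$ lower bound and the bounds on $\gamma$ in the critical case $p=k-l+1$ to be the hardest step. I would first try the support-function Harnack approach described above. The $C^2$ step also needs care to confirm that, without Proposition \ref{P3}, the third-order terms really do drop out for general $k\le N$.
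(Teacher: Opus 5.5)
Your case (i) is essentially the paper's argument. It uses the same continuity path, the maximum-principle $C^0$ bound (Theorem \ref{L5}), injectivity of $L$ via $v=u\omega$ (Lemma \ref{32149}), the full rank theorem, and uniqueness at the maximum of $u_1/u_2$ (Lemma \ref{32148}). Your gradient bound from convexity and your trace test function $\Delta u+nu$ are fine substitutes for Theorem \ref{L9} and for the $\lambda_1$ test function of Theorem \ref{L10}. Both work because $\widetilde f$ has no gradient dependence when $q=k+1$.

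For case (ii) you run the continuity method directly in $(u,\gamma)$ instead of the paper's $u^{p-1+\varepsilon}$-approximation. That route is viable for openness: when $p-1=k-l$ one has $L(u\omega)=uF^{ij}\omega_{ij}+2F^{ij}u_i\omega_j$, so $\ker L=\mathrm{span}\{u\}$ and $u^{p-1}\varphi\notin\operatorname{Range}L$ by the maximum principle. But closedness rests on a Harnack bound that you do not actually supply. Two small slips as well. At $t=0$ it is $\gamma_0=\frac{C_N^k}{C_N^l}\mathscr{P}^{k-l}$ that is determined, not the constant. And the max/min comparison only gives $\gamma_1=\gamma_2$; concluding $u_1/u_2\equiv\mathrm{const}$ needs a strong maximum principle step as in Lemma \ref{iuerte}.

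\textbf{The gap.} After normalizing $\max u=1$ you need $\min u\ge c>0$; otherwise $F=\gamma u^{p-1}\varphi$ degenerates and uniform ellipticity is lost. Your width/inradius argument cannot give this. The claim that $\max u/\min u$ is controlled by width and inradius is false: $\min u$ is the distance from the origin to the supporting hyperplanes, while width and inradius are translation invariant and do not see the origin drifting towards $\partial K$. Nor is there an a priori lower bound on the curvature quotient, since it degenerates exactly where $u$ is small. Tellingly, your proposal never uses $\mathscr{P}>1$, and this is precisely where the paper needs it.

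\textbf{The missing idea} is the scale-invariant estimate of Theorem \ref{L3}. Put $v=\log u$, $F=\sigma_k(\Lambda)/\sigma_l(\Lambda)$ and $F^{ij}=\partial F/\partial a_{ij}$.
\begin{itemize}
\item At a maximum point of $|\nabla v|^2$ with $v_1=|\nabla v|$, one gets $0\ge F^{ii}v_{ii1}+v_1\sum_{i\ge 2}F^{ii}$.
\item Here $F^{ii}v_{ii1}=e^{-v}\bigl(\nabla_1F-(k-l)v_1F\bigr)$. Because $p-1=k-l$ the $v_1$-terms cancel, leaving $\gamma u^{k-l-1}\varphi_1$.
\item Every $\mathbf{I}$ contains at least $\mathscr{P}-1$ indices other than $1$. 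Hence, using Proposition \ref{P4}, $\sum_{i\ge2}F^{ii}\ge\frac{\mathscr{P}-1}{\mathscr{P}}\sum_iF^{ii}\ge c\,(\gamma\varphi)^{\frac{k-l-1}{k-l}}u^{k-l-1}$.
\end{itemize}
Therefore $|\nabla\log u|\le C\gamma^{\frac{1}{k-l}}\max_{\mathbb{S}^n}\bigl(|\nabla\varphi|\,\varphi^{-\frac{k-l-1}{k-l}}\bigr)$, which is bounded because $\gamma$ is pinched, and so $\max u/\min u\le C$.

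This holds with $\varepsilon=0$, so it slots straight into your scheme. Incidentally, your direct route only uses the structural hypothesis with the exponent $-\frac{1}{k-l+p-1}$ that is actually assumed. Keeping the paper's approximants $u_\varepsilon$ convex would formally require it with exponent $-\frac{1}{k-l+p-1+\varepsilon}$.
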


\begin{remark}
Theorem \ref{L8} may be viewed as an extension of Chen-Xu \cite{CX-22}, who established the existence of strictly spherically convex solutions to equations \eqref{32150} and \eqref{32157} in the case $\mathscr{P}=n-1$.
\end{remark}

The rest of the paper is organized as follows. Section 2 contains preliminaries. In Section 3, we establish a full rank theorem for equation \eqref{1123}, which is used to preserve convexity along the continuity path. Section 4 is devoted to the nonhomogeneous case $p>q-l$, where we derive the a priori estimates and prove Theorem \ref{L6}. Section 5 treats the homogeneous case and proves Theorem \ref{L7}. Theorem \ref{L8} is obtained as a special case.

%%%%%%%%%%%%%%%%%%%%%%%%%%%%%%%%%%%%%%%%%%%%%%%%%%%%%%%%%%%%%%%%%%%%%%%%%%%%%%%%%%%%%%%%%%%%%%%%%%%%%%%%%%%%%%%%%%%%%%%%%%%%%%%%%%%%%%%%%%%%%%%%%%%%%%%%%%%%%%%%%%%%
\section{Preliminaries}
%%%%%%%%%%%%%%%%%%%%%%%%%%%%%%%%%%%%%%%%%%%%%%%%%%%%%%%%%%%%%%%%%%%%%%%%%%%%%%%%%%%%%%%%%%%%%%%%%%%%%%%%%%%%%%%%%%%%%%%%%%%%%%%%%%%%%%%%%%%%%%%%%%%%%%%%%%%%%%%%%%%%

In this section, we recall the definition and some basic properties of elementary symmetric functions, which can be found in \cite{Lieberman1996} and \cite{CTX-25}.
\begin{definition}
Let $\lambda=(\lambda_1,\cdots,\lambda_n)\in\mathbb{R}^n$, then we recall the definitions of the elementary symmetric functions for $1\leq k\leq n$
\begin{equation*}
\sigma_k(\lambda)= \sum _{1 \le i_1 < i_2 <\cdots<i_k\leq
n}\lambda_{i_1}\lambda_{i_2}\cdots\lambda_{i_k}.
\end{equation*}
For convenience, let $\sigma_{0}=1$ and $\sigma_{k}=0$ for $k>n$ or $k<0$.
\end{definition}
\begin{definition}
Let $1\leq k\leq n$ and $\Gamma_k$ be a cone in $\mathbb{R}^n$ determined by
$$\Gamma_k  = \{ \lambda  \in \mathbb{R}^n :\sigma _i (\lambda ) >
0,~\forall~ 1 \le i \le k\}.$$
%A function $u\in C^2(\mathbb{S}^n)$ is called admissible if $W=(u_{ij}+u\delta_{ij})$ is in $\Gamma_k$ for each $x\in \mathbb{S}^n$.
\end{definition}

Denote $\sigma_{k-1}(\lambda|i)=\frac{\partial
\sigma_k}{\partial \lambda_i}$ and
$\sigma_{k-2}(\lambda|ij)=\frac{\partial^2 \sigma_k}{\partial
\lambda_i\partial \lambda_j}$, then we list some properties of
$\sigma_k$ which will be used later.

\begin{proposition}
Let $\lambda=(\lambda_1,\cdots,\lambda_n)\in\mathbb{R}^n$ and $1\leq
k\leq n$. Then we have
\begin{enumerate}
\item[(\romannumeral1)] $\Gamma_1\supset \Gamma_2\supset \cdot\cdot\cdot\supset
\Gamma_n$;

\item[(\romannumeral2)] $\sigma_{k-1}(\lambda|i)>0$ for $\lambda \in \Gamma_k$ and
$1\leq i\leq n$;

\item[(\romannumeral3)] $\sigma_k(\lambda)=\sigma_k(\lambda|i)
+\lambda_i\sigma_{k-1}(\lambda|i)$ for $1\leq i\leq n$;

\item[(\romannumeral4)] If $\lambda \in \Gamma_{k}$ and $0\leq l<k$, then
$$\sum_{i=1}^{n}\frac{\partial[\frac{\sigma_{k}}{\sigma_{l}}]^{\frac{1}{k-l}}}
{\partial \lambda_i}\geq [\frac{C^k_n}{C^l_n}]^{\frac{1}{k-l}};$$

\item[(\romannumeral5)] $\Big[\frac{\sigma_k}{\sigma_l}\Big]^{\frac{1}{k-l}}$ are
concave in $\Gamma_k$ for $0\leq l<k$;

\item[(\romannumeral6)] If $\lambda_1\geq \lambda_2\geq \cdot\cdot\cdot\geq \lambda_n$,
then $\sigma_{k-1}(\lambda|1)\leq \sigma_{k-1}(\lambda|2)\leq
\cdot\cdot\cdot\leq \sigma_{k-1}(\lambda|n)$ for $\lambda \in
\Gamma_k$;

\item[(\romannumeral7)]
$\sum_{i=1}^{n}\sigma_{k-1}(\lambda|i)=(n-k+1)\sigma_{k-1}(\lambda)$.
\end{enumerate}
\end{proposition}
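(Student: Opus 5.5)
The seven items are classical facts about elementary symmetric functions, and I would prove them in a logical rather than numerical order. The two algebraic identities come first, since everything else rests on them. For (iii), split each monomial of $\sigma_k(\lambda)$ according to whether it contains $\lambda_i$: the monomials without $\lambda_i$ sum to $\sigma_k(\lambda|i)$, and the ones with $\lambda_i$ sum to $\lambda_i\sigma_{k-1}(\lambda|i)$. Differentiating (iii) in $\lambda_i$ also confirms that $\partial\sigma_k/\partial\lambda_i=\sigma_{k-1}(\lambda|i)$. For (vii), note that each $(k-1)$-subset $S$ of indices appears in $\sigma_{k-1}(\lambda|i)$ exactly when $i\notin S$, which happens for $n-k+1$ values of $i$.

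The monotonicity statement (vi) is also elementary. Applying (iii) twice gives
$$\sigma_{k-1}(\lambda|i)-\sigma_{k-1}(\lambda|j)=(\lambda_j-\lambda_i)\,\sigma_{k-2}(\lambda|ij).$$
So (vi) reduces to showing $\sigma_{k-2}(\lambda|ij)\ge0$ for $\lambda\in\Gamma_k$. Item (i) is immediate from the definition of the cones.

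Items (ii) and (vi) rest on the standard fact that $\lambda\in\Gamma_k$ implies $(\lambda|i)\in\Gamma_{k-1}$. I would prove this in the classical way, via Gårding's theory of hyperbolic polynomials, or by induction using Newton–Maclaurin: $\sigma_k$ is hyperbolic with respect to $(1,\dots,1)$, $\Gamma_k$ is its Gårding cone, and the derivative in direction $e_i$ is again hyperbolic with a larger cone. From $(\lambda|i)\in\Gamma_{k-1}$ we get $\sigma_{k-1}(\lambda|i)>0$, which is (ii). Iterating once more gives $(\lambda|ij)\in\Gamma_{k-2}$, so $\sigma_{k-2}(\lambda|ij)\ge0$, which completes (vi).

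Concavity (v) is the deepest item; I would cite it from Lieberman's book or Huisken–Sinestrari, or derive it from Gårding's inequality for $\sigma_k^{1/k}$ combined with Newton–Maclaurin. Once (v) is available, (iv) follows. The function $f=(\sigma_k/\sigma_l)^{1/(k-l)}$ is concave, symmetric, and homogeneous of degree one. Concavity together with symmetry gives
$$\sum_i f_i(\lambda)=f'(\lambda)\cdot(1,\dots,1)\ge f(\lambda+\mathbf 1)-f(\lambda)+\dots$$
The cleaner route is to use $f(\mu)\le f(\lambda)+\sum_i f_i(\lambda)(\mu_i-\lambda_i)$ together with Euler's identity $\sum_i f_i\lambda_i=f$. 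This gives $f(\mu)\le\sum_i f_i(\lambda)\mu_i$ for every $\mu\in\Gamma_k$. Taking $\mu=(1,\dots,1)$ yields $\sum_i f_i\ge f(1,\dots,1)=(C_n^k/C_n^l)^{1/(k-l)}$, which is (iv).

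The main obstacle is (v), the concavity of the Hessian quotient. Since these are standard results, I would quote them from the cited references rather than reprove them.
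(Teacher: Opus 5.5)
Your proposal is correct and agrees with the paper. The paper gives no argument of its own and simply cites Lieberman, Gerhardt's Lemma 2.2.19 and Caffarelli--Nirenberg--Spruck. Your proofs of (iii) and (vii), the reduction of (vi) to $\sigma_{k-2}(\lambda|ij)\geq 0$, and the concavity-plus-Euler argument for (iv) tested at $\mu=(1,\dots,1)$ are the standard arguments behind those citations. You defer (v) and the inheritance $\lambda\in\Gamma_k\Rightarrow(\lambda|i)\in\Gamma_{k-1}$ to the literature, as the paper does.

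Only minor cleanup is needed. Delete the unfinished inequality before your ``cleaner route'' in (iv), or finish it via superadditivity, $f(\lambda+\mathbf 1)\geq f(\lambda)+f(\mathbf 1)$. Note that $e_i\in\partial\Gamma_k$ for $k\geq 2$, so the G\aa rding derivative argument needs its closed-cone (limiting) version. Drop the aside that (v) follows from G\aa rding's inequality plus Newton--Maclaurin, which is not a proof as stated; a standard route is concavity of each $\sigma_{j+1}/\sigma_j$, then taking their geometric mean.
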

\begin{proof}
All the properties are well known. For example, see \cite{Lieberman1996} for proofs of (\romannumeral1), (\romannumeral2), (\romannumeral3), (\romannumeral6) and (\romannumeral7), see Lemma 2.2.19 in \cite{Gerhardt2006} for the proof of (\romannumeral4), see \cite{CNS-85} and \cite{Lieberman1996} for the proof of (\romannumeral5).
\end{proof}
We then have the following basic properties, whose proofs can be found in \cite{CC} and Lemma 2.3-2.5 of \cite{Z-24}.
\begin{proposition}
Let $A=\{a_{ij}\}$ be an $n\times n$ symmetric matrix, $\lambda(A)=(\lambda_1,\lambda_2,\cdots,\lambda_n)$ be the eigenvalues of the symmetric matrix $A$. Suppose that $A=\{a_{ij}\}$ is diagonal and $\lambda_i=a_{ii}$, then we have
\begin{eqnarray*}
\frac{\partial\lambda_i}{\partial a_{ij}}=
\begin{cases}
1,~~~&i=j,\\
0,~~~& \mbox{otherwise},
\end{cases}
\end{eqnarray*}
\begin{eqnarray*}
\frac{\partial^2\lambda_i}{\partial a_{ij}\partial a_{pq}}=
\begin{cases}
\frac{1}{\lambda_i-\lambda_p},~& i=q, j=p, i\neq p,\\
~~0,& \mbox{otherwise}.
\end{cases}
\end{eqnarray*}
\end{proposition}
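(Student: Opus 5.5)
The statement to prove is the last proposition: for a diagonal symmetric matrix $A$ with $\lambda_i=a_{ii}$ and distinct eigenvalues, it gives the first and second derivatives of an eigenvalue $\lambda_i$ with respect to the entries $a_{ij}$. I would prove it by standard first- and second-order perturbation theory for a simple eigenvalue. Throughout, $\lambda_i$ is assumed to be a simple eigenvalue at $A$; this is implicit in the formula, since it involves $\frac{1}{\lambda_i-\lambda_p}$. Then $\lambda_i$ and a unit eigenvector $v_i$, normalized by $v_i(A)=e_i$, depend smoothly (indeed analytically) on the entries near $A$. This follows from the implicit function theorem applied to the system $(B-\mu)v=0$, $|v|^2=1$ at the point $(A,\lambda_i,e_i)$. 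The linearization there is invertible exactly because $\lambda_i$ is simple.

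Consider a symmetric perturbation $A+tE$ and write $\lambda_i(t)$, $v(t)$, with $v(0)=e_i$. Differentiating $(A+tE)v=\lambda_i v$ once and pairing with $e_i$ gives
\[
\lambda_i'(0)=\langle Ee_i,e_i\rangle=E_{ii}.
\]
Pairing with $e_p$ for $p\neq i$ gives
\[
v_p'(0)=\frac{E_{pi}}{\lambda_i-\lambda_p},
\]
and the normalization $|v|=1$ forces $v_i'(0)=0$. Differentiating a second time and pairing with $e_i$ gives
\[
\tfrac12\lambda_i''(0)=\langle Ev'(0),e_i\rangle=\sum_{p\neq i}\frac{E_{ip}E_{pi}}{\lambda_i-\lambda_p}.
\]

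To read off partial derivatives, I would use the convention implicit in the formula: $\lambda_i$ is regarded as a function of all $n^2$ entries $a_{ij}$ treated as independent variables, evaluated at symmetric matrices. The first variation is linear in $E$, namely $\sum_{j,q}\delta_{ij}\delta_{iq}E_{jq}$ with $j=q=i$. This gives $\partial\lambda_i/\partial a_{jq}=1$ if $j=q=i$ and $0$ otherwise, which is the first display. The second variation is the quadratic form $2\sum_{p\neq i}\frac{E_{ip}E_{pi}}{\lambda_i-\lambda_p}$. Polarizing it, the only nonzero mixed second derivatives are in the pair of variables $a_{ip}$ and $a_{pi}$, and their value is $\frac{1}{\lambda_i-\lambda_p}$. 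This matches the stated case $j=p$, $q=i$, $i\neq p$, with every other second derivative equal to $0$.

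The main obstacle is not analytic but a matter of bookkeeping with this convention. If one instead parametrizes by symmetric entries only, the factor of $2$ and the symmetrization differ. So the care needed is in showing that, with the $a_{ij}$ treated as independent, the Hessian of $\lambda_i$ in $(a_{ij},a_{pq})$ has exactly the symmetric form claimed. Simplicity of $\lambda_i$ should also be recorded as a hypothesis.
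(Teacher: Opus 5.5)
The paper gives no proof of this proposition; it defers to \cite{CC} and Lemma 2.3--2.5 of \cite{Z-24}. Your first- and second-order perturbation argument is therefore the natural self-contained route. Recording simplicity of $\lambda_i$ as a hypothesis is correct: without it $\lambda_i$ need not even be differentiable.

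As written, though, the step you call bookkeeping is a genuine gap. You differentiate only along symmetric $E$, and the values of a quadratic form on symmetric matrices do not determine a Hessian in the $n^2$ independent variables $a_{ij}$. On symmetric $E$ the expressions $E_{ip}E_{pi}$, $E_{ip}^2$ and $\tfrac12(E_{ip}^2+E_{pi}^2)$ coincide. The last two, however, polarize to a nonzero $\partial^2\lambda_i/\partial a_{ip}^2$, which contradicts the ``otherwise $0$'' clause. Likewise, a function ``of $n^2$ independent entries evaluated at symmetric matrices'' has no partial derivatives in non-symmetric directions, so even the first display is not pinned down. Writing $E_{ip}E_{pi}$ rather than $E_{ip}^2$ before polarizing is exactly the choice that needs justification, and the proposal does not supply it.

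The fix is already inside your setup. The implicit function theorem for $(B-\mu)v=0$, $|v|^2=1$ at $(A,\lambda_i,e_i)$ never uses symmetry of $B$. The linearization $(\delta\mu,\delta v)\mapsto((A-\lambda_i)\delta v-\delta\mu\, e_i,\ 2\langle e_i,\delta v\rangle)$ is invertible once $\lambda_i$ is simple. Hence $\lambda_i(B)$ is a smooth function of all $n^2$ entries for every real $B$ near $A$, and this is the extension with respect to which the stated partials are meant.

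Now rerun your computation with arbitrary $E\in\mathbb{R}^{n\times n}$.
\begin{itemize}
\item When you pair with $e_i$, you only use $\langle (A-\lambda_i)w,e_i\rangle=0$. This holds because $A$ is diagonal, not because $E$ is symmetric.
\item You again obtain $\lambda_i'(0)=E_{ii}$, $v_p'(0)=E_{pi}/(\lambda_i-\lambda_p)$ for $p\neq i$, and $v_i'(0)=0$.
\item In the second-order identity the $v_i''$ terms cancel, since $\langle Av'',e_i\rangle=\lambda_i v_i''$. This gives $\lambda_i''(0)=2\sum_{p\ne i}E_{ip}E_{pi}/(\lambda_i-\lambda_p)$ for all $E$.
\end{itemize}
Polarizing this form on the full space legitimately gives $\partial^2\lambda_i/\partial a_{ip}\partial a_{pi}=1/(\lambda_i-\lambda_p)$, with every other second derivative zero. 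The linear term gives the first display.

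Any other smooth extension yields the same values when contracted against symmetric $\xi$, which is how such formulas are used in practice. It is nonetheless this particular extension that makes the displayed entries literally true.
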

\begin{proposition}\label{p1}
Suppose $A=\{a_{ij}\}$ is diagonal and $m(1\leq m\leq n)$ is a positive integer, then
\begin{eqnarray*}
\frac{\partial \sigma_m(A)}{\partial a_{ij}}=
  \begin{cases}
  \sigma_{m-1}(A|i), ~&i=j,\\
  0, ~~~&otherwise,
  \end{cases}
\end{eqnarray*}
\begin{eqnarray*}
\frac{\partial^2\sigma_m(A)}{\partial a_{ij}\partial a_{pq}}=
\begin{cases}
\sigma_{m-2}(A|ip), ~~~&i=j,p=q,i\neq p,\\
-\sigma_{m-2}(A|ip), ~~~&i=q,j=p,i\neq j,\\
0,~~~&\mbox{otherwise}.
\end{cases}
\end{eqnarray*}
\end{proposition}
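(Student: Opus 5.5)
The statement is Proposition \ref{p1}, a standard computation for the derivatives of $\sigma_m(A)$ at a diagonal matrix. I would prove it directly from the expansion of $\sigma_m(A)$ as the sum of all principal $m\times m$ minors of $A$:
\begin{equation*}
\sigma_m(A)=\sum_{|J|=m}\det(a_{ij})_{i,j\in J}.
\end{equation*}
This formula holds for any symmetric $A$, since the eigenvalues of $A$ are the roots of $\det(tI+A)=\sum_m \sigma_m(A)t^{n-m}$, and the coefficient of $t^{n-m}$ is exactly the sum of the principal $m$-minors. In this form $\sigma_m$ is a polynomial in the entries $a_{ij}$, so we may differentiate term by term. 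We treat the $a_{ij}$ as independent variables, which is the convention behind the off-diagonal second-derivative formula.

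For the first derivatives, I would use the Leibniz formula for each minor. Each minor is a sum over permutations $\tau$ of $J$ of $\mathrm{sgn}(\tau)\prod_{i\in J}a_{i\tau(i)}$. Differentiating with respect to $a_{ij}$ gives a sum of products of $m-1$ entries. At a diagonal matrix such a product is nonzero only if the term uses only diagonal entries. If $i\neq j$, the factor $a_{ij}$ is off-diagonal, so the remaining $m-1$ factors cannot all be diagonal: the permutation is not the identity, so it moves at least two indices. Hence the derivative vanishes at a diagonal matrix. If $i=j$, only the identity permutation survives, and we get $\sum_{J\ni i}\prod_{s\in J\setminus\{i\}}\lambda_s=\sigma_{m-1}(A|i)$.

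For the second derivatives, I would differentiate with respect to $a_{ij}$ and $a_{pq}$ and keep the permutation terms whose remaining $m-2$ factors are all diagonal. This forces $\tau$ to fix everything except possibly the indices $i,j,p,q$, and requires $\{(i,j),(p,q)\}$ to be exactly the set of non-fixed pairs of $\tau$.
\begin{enumerate}
\item[(a)] If $i=j$, $p=q$ and $i\neq p$, then $\tau=\mathrm{id}$ on a set $J\supset\{i,p\}$. The result is $\sum_{J\ni i,p}\prod_{s\in J\setminus\{i,p\}}\lambda_s=\sigma_{m-2}(A|ip)$.
\item[(b)] If $i\neq j$, then $\tau$ must be the transposition $(i\,j)$, which forces $(p,q)=(j,i)$. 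The sign $-1$ gives $-\sigma_{m-2}(A|ij)$, which equals $-\sigma_{m-2}(A|ip)$ since $p=j$.
\item[(c)] Every other configuration is zero. When $i=j=p=q$, the second derivative vanishes because each term is linear in each entry. Mixed cases such as one diagonal entry with one off-diagonal entry leave an off-diagonal factor, so they vanish at a diagonal matrix.
\end{enumerate}

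The only delicate point is bookkeeping: making sure every index configuration falls into exactly one of these cases, and that the symmetric-matrix convention (treating $a_{ij}$ and $a_{ji}$ as independent) matches the stated formula. There is no real obstacle here.
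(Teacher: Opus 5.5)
Your proof is correct. The paper gives no argument of its own for Proposition~\ref{p1}; it only refers to \cite{CC} and Lemmas 2.3--2.5 of \cite{Z-24}. Your principal-minor/Leibniz computation is the standard argument behind such statements and is self-contained. You also state the convention of treating $a_{ij}$ and $a_{ji}$ as independent, which is the right thing to make explicit.

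Two small points of hygiene:
\begin{enumerate}
\item[(1)] The roots of $\det(tI+A)$ are $-\lambda_i$, not $\lambda_i$. What you actually use, $\det(tI+A)=\prod_i(t+\lambda_i)=\sum_m\sigma_m(\lambda)t^{n-m}$, is correct, so nothing breaks.
\item[(2)] The second-derivative bookkeeping is cleaner, and covers configurations you only gesture at in (c), if you argue as follows. A term $\mathrm{sgn}(\tau)\prod_{s}a_{s\tau(s)}$ contributes only if it contains both $a_{ij}$ and $a_{pq}$ as factors. Each term uses exactly one entry per row and is linear in each entry, so this forces $i\neq p$. The remaining factors $a_{s\tau(s)}$ with $s\notin\{i,p\}$ must all be diagonal, so $\tau$ fixes every row other than $i$ and $p$. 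Hence $\tau$ is either the identity, which gives case (a) with sign $+1$, or the transposition $(i\,p)$, which gives case (b) with sign $-1$. Every other index pattern, including $i=p$ with $j\neq q$ and the mixed diagonal/off-diagonal cases, then vanishes at once.
\end{enumerate}
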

\begin{proposition}\label{NM}
For $\lambda \in \Gamma_m$ and $m > l \geq 0$, $ r > s \geq 0$, $m
\geq r$, $l \geq s$, we have
\begin{align}
\Bigg[\frac{{\sigma _m (\lambda )}/{C_n^m }}{{\sigma _l (\lambda
)}/{C_n^l }}\Bigg]^{\frac{1}{m-l}} \le \Bigg[\frac{{\sigma _r
(\lambda )}/{C_n^r }}{{\sigma _s (\lambda )}/{C_n^s
}}\Bigg]^{\frac{1}{r-s}}. \notag
\end{align}
\end{proposition}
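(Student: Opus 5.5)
We prove the generalized Newton--Maclaurin inequality of Proposition \ref{NM} by reducing it to the one-step inequality
\begin{equation*}
\frac{\sigma_{j+1}(\lambda)/C_n^{j+1}}{\sigma_j(\lambda)/C_n^{j}}\le \frac{\sigma_{j}(\lambda)/C_n^{j}}{\sigma_{j-1}(\lambda)/C_n^{j-1}},\qquad 1\le j\le m-1,\ \lambda\in\Gamma_m .
\end{equation*}
This is the classical Newton inequality $E_{j-1}E_{j+1}\le E_j^2$ for the normalized functions $E_j=\sigma_j/C_n^j$. It holds for all $\lambda\in\mathbb{R}^n$: one proof applies Rolle's theorem to the real-rooted polynomial $\prod_i(t+\lambda_i)$ and reduces to a quadratic with real roots. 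Since $\lambda\in\Gamma_m$, we have $E_0,\dots,E_m>0$ by the definition of $\Gamma_m$. We may therefore divide and conclude that the ratios $R_j:=E_j/E_{j-1}$, $1\le j\le m$, are positive and nonincreasing in $j$.

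Next we express the quotients through these ratios. For $s<r\le m$,
\begin{equation*}
\frac{E_r}{E_s}=\prod_{j=s+1}^{r}R_j,\qquad \Big[\frac{E_r}{E_s}\Big]^{\frac1{r-s}}=\Big(\prod_{j=s+1}^{r}R_j\Big)^{\frac1{r-s}},
\end{equation*}
so the left-hand side of Proposition \ref{NM} is the geometric mean $G(l,m)$ of $R_{l+1},\dots,R_m$. The right-hand side is the geometric mean $G(s,r)$ of $R_{s+1},\dots,R_r$. Because $(R_j)$ is nonincreasing, a geometric mean over an index window decreases when either endpoint of the window moves right.

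It then remains to compare the two windows. The hypotheses $s\le l$, $r\le m$, $s<r$, $l<m$ mean that $[l+1,m]$ is obtained from $[s+1,r]$ by moving both endpoints to the right. Formally, we first use $r\le m$ to get $G(s,r)\ge G(s,m)$. This holds because adjoining the terms $R_{r+1},\dots,R_m$, each at most $R_r\le G(s,r)$, cannot increase the geometric mean. We then use $s\le l$ to get $G(s,m)\ge G(l,m)$. This holds because deleting the leading terms $R_{s+1},\dots,R_l$, each at least $R_{l+1}\ge G(l,m)$, cannot increase the mean. Chaining the two gives $G(l,m)\le G(s,r)$, which is the claim.

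The main obstacle is Newton's inequality itself, together with the care needed in the cone $\Gamma_m$ rather than the positive cone. We must make sure that only $E_0,\dots,E_m$ appear, so that positivity is guaranteed. We also need Newton's inequality in its form valid for arbitrary real $\lambda$, so that no positivity of the individual $\lambda_i$ is required. Everything after that is elementary monotonicity of geometric means.
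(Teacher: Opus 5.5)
Your proof is correct: Newton's inequality $E_{j-1}E_{j+1}\le E_j^2$ holds for all real $\lambda$, and positivity of $E_0,\dots,E_m$ on $\Gamma_m$ makes the ratios $E_j/E_{j-1}$ ($1\le j\le m$) positive and nonincreasing. Your two-step comparison of window geometric means is then sound, using $R_r\le G(s,r)$ when extending to $m$ and $R_{l+1}\ge G(l,m)$ when deleting down to $l$. The paper gives no proof of its own, only deferring to the cited literature, and your argument is the standard proof of the generalized Newton--Maclaurin inequality.
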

\begin{proposition}
Let $A=\{a_{ij}\}$ be an $n\times n$ symmetric matrix, $\lambda=(\lambda_{1},\lambda_{2},\cdot\cdot\cdot,\lambda_{n})$ be the eigenvalues of the symmetric matrix $A$ with $\lambda_{1}\geq\cdot\cdot\cdot\geq\lambda_{n}$ and $\Lambda=(\Lambda_{\mathbf{I}_{1}},\cdot\cdot\cdot,\Lambda_{\mathbf{I}_{N}})\in\Gamma_{k}$ with $\Lambda_{\mathbf{I}_{1}}\geq\cdot\cdot\cdot\geq\Lambda_{\mathbf{I}_{N}}$, where
$$\Lambda_{\mathbf{I}_{1}}=\lambda_{1}+\cdot\cdot\cdot+\lambda_{\mathscr{P}},$$
and
$$\Lambda_{\mathbf{I}_{N}}=\lambda_{n-\mathscr{P}+1}+\cdot\cdot\cdot+\lambda_{n},$$
we have
\begin {equation*}
\frac{\partial[\frac{\sigma_{k}(\Lambda)}{\sigma_{l}(\Lambda)}]}{\partial \Lambda_{\mathbf{I}_{1}}}\leq\cdot\cdot\cdot\leq\frac{\partial[\frac{\sigma_{k}(\Lambda)}{\sigma_{l}(\Lambda)}]}{\partial \Lambda_{\mathbf{I}_{N}}}, \quad for\quad0\leq l<k\leq N,
\end {equation*}
and
\begin {equation*}
\frac{\partial[\frac{\sigma_{k}(\Lambda)}{\sigma_{l}(\Lambda)}]}{\partial \lambda_{1}}\leq\cdot\cdot\cdot\leq\frac{\partial[\frac{\sigma_{k}(\Lambda)}{\sigma_{l}(\Lambda)}]}{\partial \lambda_{n}}, \quad for\quad0\leq l<k\leq N.
\end {equation*}
\end{proposition}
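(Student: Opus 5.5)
Write $F(\Lambda)=\sigma_k(\Lambda)/\sigma_l(\Lambda)$ and $F^{\mathbf I}=\partial F/\partial\Lambda_{\mathbf I}$. The first chain of inequalities is a statement about the symmetric function $F$ of the $N$ variables $\Lambda_{\mathbf I_1}\ge\cdots\ge\Lambda_{\mathbf I_N}$ on $\Gamma_k\subset\mathbb R^N$. The plan is to show that for any two indices $\mathbf I,\mathbf J$ with $\Lambda_{\mathbf I}\ge\Lambda_{\mathbf J}$ one has $F^{\mathbf I}\le F^{\mathbf J}$. The second chain then follows from the chain rule: $\partial F/\partial\lambda_i=\sum_{\mathbf I\ni i}F^{\mathbf I}$, together with a pairing of the index sets containing $i$ with those containing $j$.

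\textbf{Step 1 (the quotient in $\Lambda$).} Fix $\mathbf I\neq\mathbf J$ and write $\sigma_m(\Lambda|\mathbf I\mathbf J)$ for $\sigma_m$ with both entries deleted. The expansion in Proposition (iii) gives
$$\sigma_{m-1}(\Lambda|\mathbf I)-\sigma_{m-1}(\Lambda|\mathbf J)=(\Lambda_{\mathbf J}-\Lambda_{\mathbf I})\,\sigma_{m-2}(\Lambda|\mathbf I\mathbf J).$$
Substituting into
$$F^{\mathbf I}-F^{\mathbf J}=\frac{\sigma_l\bigl(\sigma_{k-1}(\cdot|\mathbf I)-\sigma_{k-1}(\cdot|\mathbf J)\bigr)-\sigma_k\bigl(\sigma_{l-1}(\cdot|\mathbf I)-\sigma_{l-1}(\cdot|\mathbf J)\bigr)}{\sigma_l^2}$$
yields
$$F^{\mathbf I}-F^{\mathbf J}=\frac{(\Lambda_{\mathbf J}-\Lambda_{\mathbf I})}{\sigma_l^2}\Bigl[\sigma_l\,\sigma_{k-2}(\Lambda|\mathbf I\mathbf J)-\sigma_k\,\sigma_{l-2}(\Lambda|\mathbf I\mathbf J)\Bigr].$$
It remains to show the bracket is nonnegative on $\Gamma_k$.

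To do this, expand $\sigma_l$ and $\sigma_k$ in terms of $\Lambda_{\mathbf I},\Lambda_{\mathbf J}$ and the $\sigma_\ast(\Lambda|\mathbf I\mathbf J)=:\tilde\sigma_\ast$, using
$$\sigma_m=\tilde\sigma_m+(\Lambda_{\mathbf I}+\Lambda_{\mathbf J})\tilde\sigma_{m-1}+\Lambda_{\mathbf I}\Lambda_{\mathbf J}\tilde\sigma_{m-2}.$$
The bracket then becomes a combination of terms of the form $\tilde\sigma_a\tilde\sigma_b-\tilde\sigma_{a+1}\tilde\sigma_{b-1}$ with $a<b$. Each such term is nonnegative by the Newton–Maclaurin type inequalities of Proposition \ref{NM} applied to $\tilde\Lambda=\Lambda|\mathbf I\mathbf J$. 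This uses that $\tilde\Lambda\in\Gamma_{k-2}$, and in fact $\tilde\Lambda\in\Gamma_{k-1}$ by the standard fact that deleting an entry from a vector of $\Gamma_k$ leaves a vector of $\Gamma_{k-1}$.

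This is the standard argument for monotonicity of $\partial(\sigma_k/\sigma_l)/\partial\lambda_i$ in $\Gamma_k$, and I will cite it in that form (as in \cite{CTX-25}) rather than reprove it: on $\Gamma_k$, $\lambda_i\ge\lambda_j$ implies $\partial_i(\sigma_k/\sigma_l)\le\partial_j(\sigma_k/\sigma_l)$. Applied in $\mathbb R^N$, this gives the first chain.

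\textbf{Step 2 (back to $\lambda$).} For $i<j$, so that $\lambda_i\ge\lambda_j$, let $\tau$ be the transposition of $i$ and $j$ acting on $\mathfrak I$. Then
$$\frac{\partial F}{\partial\lambda_i}-\frac{\partial F}{\partial\lambda_j}=\sum_{\mathbf I\ni i,\ \mathbf I\not\ni j}\bigl(F^{\mathbf I}-F^{\tau\mathbf I}\bigr),$$
because the sets containing both $i$ and $j$ cancel. For $\mathbf I$ containing $i$ but not $j$,
$$\Lambda_{\mathbf I}-\Lambda_{\tau\mathbf I}=\lambda_i-\lambda_j\ge0,$$
so Step 1 gives $F^{\mathbf I}\le F^{\tau\mathbf I}$. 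Summing over such $\mathbf I$ yields the second chain.

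\textbf{Main obstacle.} The main difficulty is the sign of the bracket in Step 1, that is, getting the Newton–Maclaurin bookkeeping right so that only cone membership of $\tilde\Lambda$ is needed. If that expansion proves messy, I would instead argue via concavity (Proposition (v)). Set $G=F^{1/(k-l)}$, which is concave and symmetric. Then $t\mapsto\partial_{\mathbf I}G-\partial_{\mathbf J}G$ along the segment swapping $\Lambda_{\mathbf I}$ and $\Lambda_{\mathbf J}$ is monotone, which gives $G^{\mathbf I}\le G^{\mathbf J}$. Multiplying by the common positive factor $(k-l)G^{k-l-1}$ then transfers the inequality to $F$.
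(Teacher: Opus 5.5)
Your reduction is right, and Step 2 is complete as written. The chain rule gives $\partial F/\partial\lambda_i=\sum_{\mathbf I\ni i}F^{\mathbf I}$, the sets containing both $i$ and $j$ cancel, and $\tau$ pairs the remaining ones with $\Lambda_{\mathbf I}-\Lambda_{\tau\mathbf I}=\lambda_i-\lambda_j$. No cone condition on $\lambda$ itself is needed.

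The weak point is the Newton--Maclaurin sketch for the sign of the bracket in Step 1, which would not go through.
\begin{enumerate}
\item $\tilde\Lambda=\Lambda|\mathbf I\mathbf J$ has two entries removed, so in general it lies only in $\Gamma_{k-2}$, not in $\Gamma_{k-1}$. For example, take $n=3$, $\mathscr P=2$, $\lambda=(\frac{6}{5},-\frac{1}{5},-\frac{1}{5})$. Then $\Lambda=(1,1,-\frac{2}{5})\in\Gamma_2$, and removing the two entries equal to $1$ leaves $-\frac{2}{5}\notin\Gamma_1$.
\item After expansion, the $\Lambda_{\mathbf I}\Lambda_{\mathbf J}$ terms cancel and the bracket equals
\begin{equation*}
\bigl[\tilde\sigma_{k-2}\tilde\sigma_l-\tilde\sigma_{l-2}\tilde\sigma_k\bigr]+(\Lambda_{\mathbf I}+\Lambda_{\mathbf J})\bigl[\tilde\sigma_{k-2}\tilde\sigma_{l-1}-\tilde\sigma_{l-2}\tilde\sigma_{k-1}\bigr].
\end{equation*}
Here $\Lambda_{\mathbf I}+\Lambda_{\mathbf J}$ can be negative on $\Gamma_k$; for instance $(1,1,-\frac{1}{10},-\frac{1}{10})\in\Gamma_2\subset\mathbb R^4$. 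Also $\tilde\sigma_{k-1}$ and $\tilde\sigma_k$ have no sign when $\tilde\Lambda$ is only in $\Gamma_{k-2}$. So Proposition \ref{NM}, which needs membership in $\Gamma_m$ for the top index $m$, cannot be applied term by term.
\item Newton--Maclaurin controls such products in the direction opposite to the one you state. Differences $\tilde\sigma_a\tilde\sigma_b-\tilde\sigma_{a+1}\tilde\sigma_{b-1}$ with $a<b$ are typically negative: at $(1,1,1)$ one has $\sigma_0\sigma_2-\sigma_1^2=-6$.
\end{enumerate}

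Your fallback, however, is correct and should become the proof of Step 1.
\begin{itemize}
\item $G=F^{1/(k-l)}$ is smooth, positive and concave on the convex, permutation-invariant cone $\Gamma_k\subset\mathbb R^N$, by property (v) of Section 2 applied in $\mathbb R^N$.
\item If $\tau$ swaps the $\mathbf I$- and $\mathbf J$-coordinates, then $G(\tau\Lambda)=G(\Lambda)$.
\item The tangent-plane inequality $G(\tau\Lambda)\le G(\Lambda)+\nabla G(\Lambda)\cdot(\tau\Lambda-\Lambda)$ then gives $(\Lambda_{\mathbf I}-\Lambda_{\mathbf J})(G^{\mathbf J}-G^{\mathbf I})\ge 0$. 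When $\Lambda_{\mathbf I}=\Lambda_{\mathbf J}$, the two partials coincide by symmetry.
\item In your segment formulation, this symmetry is the missing anchor: the difference of partials vanishes at the midpoint.
\item Multiplying by $(k-l)G^{k-l-1}>0$ gives $F^{\mathbf I}\le F^{\mathbf J}$.
\end{itemize}

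If you prefer an algebraic proof, the bracket can be handled without Newton--Maclaurin.
\begin{itemize}
\item For $l\le 1$ the bracket equals $\sigma_l\,\sigma_{k-2}(\Lambda|\mathbf I\mathbf J)\ge 0$.
\item For $l\ge 2$, apply ellipticity of $\sigma_k/\sigma_l$ once on $\Gamma_k$ at $\Lambda$, and once at $\Lambda|\mathbf I\in\Gamma_{k-1}$ (a single deletion, so this membership is valid). This gives
\begin{equation*}
\frac{\sigma_k(\Lambda)}{\sigma_l(\Lambda)}<\frac{\sigma_{k-1}(\Lambda|\mathbf I)}{\sigma_{l-1}(\Lambda|\mathbf I)}<\frac{\sigma_{k-2}(\Lambda|\mathbf I\mathbf J)}{\sigma_{l-2}(\Lambda|\mathbf I\mathbf J)},
\end{equation*}
which is exactly the positivity of the bracket.
\end{itemize}

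The paper does not prove this proposition. It lists it among basic facts and refers to \cite{CC} and Lemmas 2.3--2.5 of \cite{Z-24}. Once Step 1 is argued by concavity, your proof is self-contained where the paper only cites. The first chain is the general fact that a symmetric concave function has partial derivatives ordered oppositely to its arguments, applied in $\mathbb R^N$. The transposition pairing then transfers it to $\lambda$ with no further hypothesis.
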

\begin{proposition}\label{P3}
Let $A=\{a_{ij}\}$ be an $n\times n$ symmetric matrix, $\lambda=(\lambda_{1},\lambda_{2},\cdot\cdot\cdot,\lambda_{n})$ be the eigenvalues of the symmetric matric $A$ with $\lambda_{1}\geq\cdot\cdot\cdot\geq\lambda_{n}$ and $\Lambda=(\Lambda_{\mathbf{I}_{1}},\cdot\cdot\cdot,\Lambda_{\mathbf{I}_{N}})\in\Gamma_{k}$ with $\Lambda_{\mathbf{I}_{1}}\geq\cdot\cdot\cdot\geq\Lambda_{\mathbf{I}_{N}}$. If $0\leq l<k\leq C_{n-1}^{\mathscr{P}-1}$, then there exists a positive constant
$c(n,k,l,\mathscr{P})$ such that
\begin {equation*}
\frac{\partial[\frac{\sigma_{k}(\Lambda)}{\sigma_{l}(\Lambda)}]^{\frac{1}{k-l}}}{\partial \lambda_{i}}\geq c(n,k,l,\mathscr{P})\sum_{i=1}^{n}\frac{\partial[\frac{\sigma_{k}(\Lambda)}{\sigma_{l}(\Lambda)}]^{\frac{1}{k-l}}}{\partial \lambda_{i}},\quad for\quad1\leq i\leq n.
\end {equation*}
\end{proposition}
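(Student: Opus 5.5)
Write $F(\Lambda)=\big[\sigma_k(\Lambda)/\sigma_l(\Lambda)\big]^{\frac{1}{k-l}}$ and $F^{\mathbf{I}}=\partial F/\partial\Lambda_{\mathbf{I}}$. By the chain rule,
\[
\frac{\partial F}{\partial\lambda_i}=\sum_{\mathbf{I}\ni i}F^{\mathbf{I}},\qquad \sum_{i=1}^n\frac{\partial F}{\partial\lambda_i}=\mathscr{P}\sum_{\mathbf{I}}F^{\mathbf{I}}.
\]
The claim is therefore equivalent to a lower bound for $\sum_{\mathbf{I}\ni i}F^{\mathbf{I}}$ by a fixed multiple of the full sum $\sum_{\mathbf{I}}F^{\mathbf{I}}$. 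By the preceding Proposition, $\partial F/\partial\lambda_i$ is nondecreasing in $i$ (since $F$ is a monotone function of $\sigma_k/\sigma_l$). Hence it suffices to treat $i=1$, the index of the largest eigenvalue. The multi-indices containing $1$ are exactly $C_{n-1}^{\mathscr{P}-1}$ of the $N$ entries of $\Lambda$, and this is where the hypothesis $k\le C_{n-1}^{\mathscr{P}-1}$ will be used.

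The key step is an $\sigma_k$-level estimate. Let $S=\{\mathbf{I}:1\in\mathbf{I}\}$, $|S|=M:=C_{n-1}^{\mathscr{P}-1}\ge k$. Since $\Lambda_{\mathbf{I}}=\lambda_1+(\text{sum of } \mathscr{P}-1 \text{ others})$, each $\Lambda_{\mathbf{I}}$ with $\mathbf{I}\in S$ is at least the corresponding $\Lambda_{\mathbf{J}}$ with $1$ replaced. So the entries indexed by $S$ dominate the entries in the complement, up to a combinatorial rearrangement. Write $\Lambda=(\Lambda_S,\Lambda_{S^c})$, with $F^{\mathbf{I}}$ given by the standard quotient formula in terms of $\sigma_{k-1}(\Lambda|\mathbf{I})$ and $\sigma_{l-1}(\Lambda|\mathbf{I})$. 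I would first handle $l=0$: I want to show $\sum_{\mathbf{I}\in S}\sigma_{k-1}(\Lambda|\mathbf{I})\ge c\sum_{\mathbf{I}}\sigma_{k-1}(\Lambda|\mathbf{I})=c(N-k+1)\sigma_{k-1}(\Lambda)$. Expand $\sigma_{k-1}(\Lambda)$ into monomials $\Lambda_{\mathbf{J}_1}\cdots\Lambda_{\mathbf{J}_{k-1}}$. Since $M\ge k$, for any such $(k-1)$-set there is some $\mathbf{I}\in S$ not in it, so each monomial appears in $\sigma_{k-1}(\Lambda|\mathbf{I})$ for at least one $\mathbf{I}\in S$. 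The catch is that monomials are not all positive in $\Gamma_k$. To get around this, I would use the ordering $\Lambda_{\mathbf{I}_1}\ge\dots$ and a standard fact: in $\Gamma_k$, with the entries ordered, $\sigma_{k-1}(\Lambda|\mathbf{I}_N)\ge c\,\sigma_{k-1}(\Lambda)$ and $\sigma_{k-1}(\Lambda|\mathbf{I}_j)\ge c(n,k)\sigma_{k-1}(\Lambda)$ for all $j\ge k$. This follows from the classical estimate $\lambda_1\cdots\lambda_{k-1}\sigma_{k-1}(\lambda|k)\ge c\,\sigma_{k-1}$-type bounds, namely the Lin–Trudinger inequality $\sigma_{k-1}(\lambda|k)\ge c\,\sigma_{k-1}(\lambda)$. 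Because $|S|\ge k$ and the $F^{\mathbf{I}}$ are monotone in the ordering, $S$ contains some index of rank $\ge k$ in the ordering of $\Lambda$. That index $\mathbf{I}^*\in S$ gives $F^{\mathbf{I}^*}\ge c\,\sigma_{k-1}(\Lambda)$-level, which bounds $\sum F^{\mathbf{I}}/N$ from below.

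For general $l$, I would do the same with the quotient derivative. Set $Q=\sigma_k/\sigma_l$; then $\partial Q/\partial\Lambda_{\mathbf{I}}=\big(\sigma_{k-1}(\Lambda|\mathbf{I})\sigma_l-\sigma_k\sigma_{l-1}(\Lambda|\mathbf{I})\big)/\sigma_l^2$. I would invoke the known generalization of the Lin–Trudinger inequality to quotients: for $\Lambda\in\Gamma_k$ ordered decreasingly and $j\ge k$,
\[
\frac{\partial Q}{\partial\Lambda_{\mathbf{I}_j}}\ge c(N,k,l)\sum_{\mathbf{I}}\frac{\partial Q}{\partial\Lambda_{\mathbf{I}}}.
\]
This follows by writing $\sigma_m(\Lambda)=\sigma_m(\Lambda|\mathbf{I}_j)+\Lambda_{\mathbf{I}_j}\sigma_{m-1}(\Lambda|\mathbf{I}_j)$ and using Newton–Maclaurin (Proposition \ref{NM}) on the reduced vector. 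Combining with $\partial F/\partial\lambda_1\ge F^{\mathbf{I}^*}$ (all $F^{\mathbf{I}}\ge 0$ in $\Gamma_k$) and the factor $\mathscr{P}$ gives the constant $c(n,k,l,\mathscr{P})$.

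The main obstacle is the step that finds an index of rank $\ge k$ inside $S$. This requires care, since the entries of $S$ are not necessarily the top $M$ in the ordering. Ties and the choice of ordering make the selection delicate, but pigeonhole handles it: $S$ has $M\ge k$ elements, while only $k-1$ ranks lie below $k$, so some element of $S$ has rank $\ge k$. The genuinely technical part is proving the quotient Lin–Trudinger bound uniformly in $\Gamma_k$ (not only $\Gamma_N$), which I would do by induction on $l$.
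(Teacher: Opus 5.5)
Your proposal is correct and follows essentially the same route as the proof the paper relies on (the paper only cites Lemma 2.3--2.5 of \cite{Z-24} and \cite{CC}): by pigeonhole, the $C_{n-1}^{\mathscr{P}-1}\geq k$ multi-indices containing a given index include one of rank $\geq k$ in the ordering of $\Lambda$, and the quotient Lin--Trudinger bound $\partial(\sigma_k/\sigma_l)/\partial\Lambda_{\mathbf{I}_j}\geq c\sum_{\mathbf{I}}\partial(\sigma_k/\sigma_l)/\partial\Lambda_{\mathbf{I}}$ for $j\geq k$ then gives the claim. Two minor remarks: the pigeonhole step works for every $i$ directly, so the reduction to $i=1$ is not needed; and the quotient bound follows from your derivative formula and Newton--Maclaurin on $\Lambda|\mathbf{I}_j$ without any induction on $l$, provided you also use $\sigma_l(\Lambda|\mathbf{I}_j)\geq c\,\sigma_l(\Lambda)$ for $j\geq l+1$ (Lin--Trudinger at level $l+1$), together with $\sigma_{k-1}(\Lambda|\mathbf{I}_j)\geq c\,\sigma_{k-1}(\Lambda)$ and $\sum_{\mathbf{I}}\partial(\sigma_k/\sigma_l)/\partial\Lambda_{\mathbf{I}}\leq (N-k+1)\sigma_{k-1}/\sigma_l$.
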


\begin{proposition}\label{P4}
Let $\lambda=(\lambda_{1},\lambda_{2},\cdot\cdot\cdot,\lambda_{n})$ be the eigenvalues of the symmetric matrix $A$ and $\Lambda=(\Lambda_{\mathbf{I}_{1}},\cdot\cdot\cdot,\Lambda_{\mathbf{I}_{N}})$ with $0\leq l<k \leq N$, then $[\frac{\sigma_{k}(\Lambda)}{\sigma_{l}(\Lambda)}]^{\frac{1}{k-l}}$ are concave with respect to $\lambda$ and
\begin {equation*}
\sum_{i=1}^{n}\frac{\partial[\frac{\sigma_{k}(\Lambda)}{\sigma_{l}(\Lambda)}]^{\frac{1}{k-l}}}{\partial \lambda_{i}}\geq \mathscr{P}(\frac{C_{N}^{k}}{C_{N}^{l}})^{\frac{1}{k-l}}>0.
\end {equation*}
\end{proposition}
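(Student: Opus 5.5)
The plan is to write the operator as a composition of a concave function with a linear map. We work where $\Lambda(\lambda)\in\Gamma_k\subset\mathbb{R}^N$, which is implicit in the statement, as in Proposition \ref{P3}. Set
\begin{equation*}
f(\mu)=\Big[\frac{\sigma_k(\mu)}{\sigma_l(\mu)}\Big]^{\frac{1}{k-l}},\quad \mu\in\Gamma_k\subset\mathbb{R}^N,
\qquad
F(\lambda)=f(\Lambda(\lambda)).
\end{equation*}

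\emph{Concavity.} The map $\lambda\mapsto\Lambda(\lambda)=(\Lambda_{\mathbf{I}_1}(\lambda),\cdots,\Lambda_{\mathbf{I}_N}(\lambda))$ is linear from $\mathbb{R}^n$ to $\mathbb{R}^N$. Hence the set $\{\lambda:\Lambda(\lambda)\in\Gamma_k\}$ is the preimage of the convex cone $\Gamma_k$ under a linear map, and is therefore a convex cone. By property (v) of the first Proposition of this section, applied in dimension $N$ instead of $n$, $f$ is concave on $\Gamma_k$. For $\lambda,\lambda'$ in this preimage and $t\in[0,1]$, linearity gives $\Lambda(t\lambda+(1-t)\lambda')=t\Lambda(\lambda)+(1-t)\Lambda(\lambda')$. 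Concavity of $f$ then gives $F(t\lambda+(1-t)\lambda')\geq tF(\lambda)+(1-t)F(\lambda')$.

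If concavity with respect to the matrix $A$ is also wanted, it follows from the standard fact that a symmetric concave function of the eigenvalues is concave in the matrix. The symmetry of $F$ holds because a permutation of $\lambda$ only permutes the index sets $\mathbf{I}_j$, and hence the entries of $\Lambda$, while $f$ is symmetric.

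\emph{Derivative bound.} By the chain rule, $\partial\Lambda_{\mathbf{I}}/\partial\lambda_i=1$ if $i\in\mathbf{I}$ and $0$ otherwise, so
\begin{equation*}
\frac{\partial F}{\partial\lambda_i}=\sum_{\mathbf{I}\ni i}\frac{\partial f}{\partial\Lambda_{\mathbf{I}}}.
\end{equation*}
Summing over $i$ and exchanging the order of summation, each multi-index $\mathbf{I}$ is counted exactly $|\mathbf{I}|=\mathscr{P}$ times. Therefore
\begin{equation*}
\sum_{i=1}^n\frac{\partial F}{\partial\lambda_i}=\mathscr{P}\sum_{j=1}^N\frac{\partial f}{\partial\Lambda_{\mathbf{I}_j}}\geq\mathscr{P}\Big(\frac{C_N^k}{C_N^l}\Big)^{\frac{1}{k-l}},
\end{equation*}
where the last step is property (iv) of the same Proposition in dimension $N$, using $\Lambda\in\Gamma_k$ and $0\leq l<k\leq N$. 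The right-hand side is positive since $k\leq N$ gives $C_N^k>0$.

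There is no real obstacle here. The only point needing care is the hypothesis $\Lambda\in\Gamma_k$, which is what allows (iv) and (v) to be invoked with $N$ in place of $n$.
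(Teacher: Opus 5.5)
Your proof is correct. Concavity follows from composing the concave function $[\sigma_k/\sigma_l]^{1/(k-l)}$ on $\Gamma_k\subset\mathbb{R}^N$ with the linear map $\lambda\mapsto\Lambda(\lambda)$. The derivative bound follows from the chain rule, in which each $\mathbf{I}$ is counted exactly $\mathscr{P}$ times, together with property (iv) applied in dimension $N$. You are also right that the hypothesis $\Lambda\in\Gamma_k$ has to be read into the statement.

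The paper gives no proof of its own for this proposition and simply refers to \cite{CC} and Lemmas 2.3--2.5 of \cite{Z-24}. Your composition-and-counting argument is the standard way to establish it.
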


Fix $ \mathscr{P} \in \{1, \cdots, n\}$ and $n\geqslant 2$, we use the standard notation for ordered multi-indices
$$ \mathfrak{J}(\mathscr{P},n):=\{\mathbf{I}=(i_{1},\cdots,i_{\mathscr{P}})\mid i_{s}~integers~and~1\leqslant i_{1}<\cdots<i_{\mathscr{P}}\leqslant n\}. $$
For convenience, fix an order for the elements in $\mathfrak{J}(\mathscr{P},n)$:
$$\mathbf{I}_1, \mathbf{I}_2, \cdots, \mathbf{I}_N, \quad \mbox{with}~N:=C_n^\mathscr{P}= \frac{n!}{\mathscr{P}!(n-\mathscr{P})!}.$$
Set $\mathfrak{J}(0,n)=\{0\}$ and $|\mathbf{I}|=\mathscr{P}$ if $\mathbf{I}\in \mathfrak{J}(\mathscr{P},n)$. For $\mathbf{I}\in \mathfrak{J}(\mathscr{P},n)$,
\begin{enumerate}
\item[(\romannumeral1)]  $\overline{\mathbf{I}}$ is the element in $\mathfrak{J}(n - \mathscr{P},n)$ which complements $\mathbf{I}$ in $\{1,2,\cdots,n\}$ in the natural increasing order.
\item [(\romannumeral2)] $\mathbf{I}-i$ means the multi-index of length $\mathscr{P}-1$ obtained by removing $i$ from $\mathbf{I}$ for any $i \in \mathbf{I}$.
\item [(\romannumeral3)] $\mathbf{I}+j$ means the multi-index of length $\mathscr{P}+1$ obtained by adding $j$ to $\mathbf{I}$ for any $j \notin \mathbf{I}$.
\item [(\romannumeral4)] $\sigma(\mathbf{I},\mathbf{J})$ is the sign of the permutation which reorders $(\mathbf{I},\mathbf{J})$ in the natural increasing order for any
multi-index $\mathbf{J}$ with $\mathbf{I}\cap \mathbf{J}=\emptyset$. In particular set $\sigma(\overline{0},0):=1$.
\end{enumerate}

\begin{definition}
Let $\mathscr{P} \in \{1, \cdots, n\}$, the $(\mathscr{P}, k)$-cone is defined by
$$\mathcal {P}_{\mathscr{P}, k}=\{\lambda=(\lambda_1, \lambda_2, \cdots, \lambda_n)\in \mathbb{R}^n\mid \sigma_j(\Lambda) >0, \forall 1\leq j\leq k\},$$
where $\Lambda=(\Lambda_{\mathbf{I}_1}, \Lambda_{\mathbf{I}_2}, \cdots, \Lambda_{\mathbf{I}_N}) \in \mathbb{R}^N$,  and
$$\Lambda_\mathbf{I}=\lambda_{i_1}+\lambda_{i_2}+\cdots+\lambda_{i_\mathscr{P}},$$
for any $\mathbf{I}=(i_1, i_2, \cdots, i_\mathscr{P})\in \mathfrak{J}(\mathscr{P},n)$.
\end{definition}
\begin{remark}
It is worth emphasizing that $\lambda \in \mathcal {P}_{\mathscr{P}, k}$ if and only if $\Lambda \in \Gamma_k$ in $\mathbb{R}^N$.
\end{remark}
\begin{definition}
Let $u\in C^{2}(\mathbb{S}^{n})$, we call $u$ an admissible solution of \eqref{1123} if
$$\lambda(\nabla^{2}u+uI) \in \mathcal {P}_{\mathscr{P}, k},\quad \mbox{for any}\,\, x\in \mathbb{S}^{n}.$$
\end{definition}
\begin{definition}
Let $\mathscr{P} \in \{1, \cdots, n\}$ and  $A=\{a_{ij}\}$ be a symmetric matrix,  the linear derivation of $A$ on $\Lambda^\mathscr{P}\mathbb{R}^n$
as the linear map is defined by
\begin{eqnarray*}
\begin{aligned}
\mathcal{D}_A:&~~~~~~~\Lambda^\mathscr{P}\mathbb{R}^n &&\longrightarrow \Lambda^\mathscr{P}\mathbb{R}^n \\
&v_1\wedge\cdots\wedge v_\mathscr{P} &&\longrightarrow \sum_{i=1}^{\mathscr{P}} v_1 \wedge \cdots \wedge v_{i-1}\wedge (Av_i)\wedge v_{i+1}  \cdots\wedge v_\mathscr{P}.
\end{aligned}
\end{eqnarray*}
\end{definition}
 Fix an orthonormal basis $(e_{1},\cdot\cdot\cdot,e_{n})$ of $\mathbb{R}^n$ and the corresponding basis $\{e_{\mathbf{I}}\}_{\mathbf{I}\in\mathfrak{J}(\mathscr{P},n)}$ of $\Lambda^\mathscr{P}\mathbb{R}^n$, where $e_\mathbf{I}:=e_{i_1}\wedge\cdots\wedge e_{i_\mathscr{P}}$ for any $\mathbf{I}=(i_1,\cdots i_\mathscr{P}) \in \mathfrak{J}(\mathscr{P},n)$. Obviously, the eigenvalues of    $\mathcal{D}_A$ can be written as
 $\Lambda_{\mathbf{I}}=(\Lambda_{\mathbf{I}_1}, \Lambda_{\mathbf{I}_2}, \cdots, \Lambda_{\mathbf{I}_N})$ if  $\lambda=(\lambda_1,\lambda_2,\cdots,\lambda_n)$ are the eigenvalues of symmetric matrix $A$.

It is worth emphasizing that given any orthonormal basis of $\mathbb{R}^n$, $\mathcal{D}_A$ has a matrix representation with respect to the induced basis whose components are linear combinations of the entries of $A$.

\begin{proposition}\label{p2}
Let $A=\{a_{ij}\}$ be a symmetric matrix, the corresponding matrix $W:=\{ W_{\mathbf{I}\mathbf{J}}\}_{\mathbf{I},\mathbf{J}\in \mathfrak{J}(\mathscr{P},n)}$ of linear derivation $\mathcal{D}_A$
in the canonical basis $\{ e_{\mathbf{I}_1}, e_{\mathbf{I}_2}, \cdots, e_{\mathbf{I}_N}\}$ reads
\begin{eqnarray*}
W_{\mathbf{I}\mathbf{J}}=
\begin{cases}
\sum_{i\in \mathbf{I}}a_{ii}, ~~~&\mathbf{I}=\mathbf{J},\\
\sigma(i,\mathbf{I}-i)\sigma(j,\mathbf{J}-j)a_{ij}, ~~~&\mathbf{I}=i+\mathbf{K},\,\mathbf{J}=j+\mathbf{K},\,|\mathbf{K}|=\mathscr{P}-1,i\neq j,\\
0,~~~&\mbox{otherwise},
\end{cases}
\end{eqnarray*}
and hence we have
\begin{eqnarray*}
\frac{\partial W_{\mathbf{I}\mathbf{J}}}{\partial a_{ij}}=
\begin{cases}
1, ~~~&i=j,\mathbf{I}=\mathbf{J},\,i\in \mathbf{I},\\
\sigma(i,\mathbf{I}-i)\sigma(j,\mathbf{J}-j), ~~~&\mathbf{I}=i+\mathbf{J},\,\mathbf{J}=j+\mathbf{K},\,|\mathbf{K}|=\mathscr{P}-1,\,i\neq j,\\
0,~~~&\mbox{otherwise}.
\end{cases}
\end{eqnarray*}
\end{proposition}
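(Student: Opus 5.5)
The plan is to compute $\mathcal{D}_A$ directly on the basis vectors $e_{\mathbf{J}}$ and read off the coefficients. By linearity, $W_{\mathbf{I}\mathbf{J}}$ is the coefficient of $e_{\mathbf{I}}$ in $\mathcal{D}_A(e_{\mathbf{J}})$.

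Fix $\mathbf{J}=(j_1,\cdots,j_{\mathscr{P}})$ and write $Ae_{j}=\sum_{i=1}^n a_{ij}e_i$. Then
\[
\mathcal{D}_A(e_{\mathbf{J}})=\sum_{s=1}^{\mathscr{P}}\sum_{i=1}^n a_{ij_s}\, e_{j_1}\wedge\cdots\wedge e_{j_{s-1}}\wedge e_i\wedge e_{j_{s+1}}\wedge\cdots\wedge e_{j_{\mathscr{P}}}.
\]
I sort the terms into three groups, according to the index $i$.
\begin{enumerate}
\item[(a)] If $i=j_s$, the term is $a_{j_sj_s}e_{\mathbf{J}}$. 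Summing over $s$ gives the diagonal entry $W_{\mathbf{J}\mathbf{J}}=\sum_{j\in\mathbf{J}}a_{jj}$.
\item[(b)] If $i\in\mathbf{J}$ but $i\neq j_s$, the wedge product contains a repeated vector and vanishes.
\item[(c)] If $i\notin\mathbf{J}$, set $j=j_s$, $\mathbf{K}=\mathbf{J}-j$ and $\mathbf{I}=i+\mathbf{K}$. This is the only case producing off-diagonal entries.
\end{enumerate}

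For case (c), I first move $e_j$ to the front, which gives $e_{\mathbf{J}}=\sigma(j,\mathbf{K})\,e_j\wedge e_{\mathbf{K}}$. Replacing $e_j$ by $e_i$ commutes with this reordering, so the term equals $a_{ij}\sigma(j,\mathbf{K})\,e_i\wedge e_{\mathbf{K}}$. Reordering once more, $e_i\wedge e_{\mathbf{K}}=\sigma(i,\mathbf{K})e_{\mathbf{I}}$. Since $\mathbf{K}=\mathbf{I}-i=\mathbf{J}-j$, the coefficient of $e_{\mathbf{I}}$ is $\sigma(i,\mathbf{I}-i)\sigma(j,\mathbf{J}-j)a_{ij}$.

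Next I check that this assignment is well defined. Given distinct $\mathbf{I}\neq\mathbf{J}$ with $|\mathbf{I}\cap\mathbf{J}|=\mathscr{P}-1$, the indices $i,j,\mathbf{K}$ are uniquely determined. Hence exactly one term of $\mathcal{D}_A(e_{\mathbf{J}})$ contributes to $e_{\mathbf{I}}$. If $|\mathbf{I}\cap\mathbf{J}|<\mathscr{P}-1$, no term contributes and $W_{\mathbf{I}\mathbf{J}}=0$. This gives the three cases of the formula. Symmetry of $A$ then makes $W$ symmetric, because interchanging $(\mathbf{I},i)$ with $(\mathbf{J},j)$ leaves the sign product unchanged.

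The derivative formula follows immediately, since each $W_{\mathbf{I}\mathbf{J}}$ is linear in the entries of $A$. Differentiating in $a_{ij}$, with $a_{ij}$ treated as an independent variable as in Proposition \ref{p1}, gives:
\begin{enumerate}
\item[(a)] $1$ when $i=j$, $\mathbf{I}=\mathbf{J}$ and $i\in\mathbf{I}$;
\item[(b)] the sign product $\sigma(i,\mathbf{I}-i)\sigma(j,\mathbf{J}-j)$ in the off-diagonal configuration;
\item[(c)] $0$ otherwise.
\end{enumerate}
The only step requiring care is the sign bookkeeping in case (c); the rest is routine.
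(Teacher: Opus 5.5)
Your proposal is correct and follows essentially the paper's argument. Like the paper, you expand $\mathcal{D}_A$ on a basis multivector, discard the terms containing a repeated vector, keep the diagonal contributions $\sum a_{ii}$, and record the reordering signs $\sigma(i,\mathbf{K})\sigma(j,\mathbf{K})$ for the terms whose new index lies outside the multi-index. Your extra checks make explicit what the paper leaves to ``direct calculations'': that exactly one term contributes to each off-diagonal entry, that $W$ is symmetric, and that the off-diagonal derivative case should read $\mathbf{I}=i+\mathbf{K}$ rather than the misprinted $\mathbf{I}=i+\mathbf{J}$.
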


\begin{proof}
For any $ \mathbf{I}\in \mathfrak{J}$ with $\mathbf{I}=(i_1,i_2,\cdots,i_\mathscr{P})$, we have
\begin{align*}
\mathcal{D}_A(e_\mathbf{I})=&\sum _{i_s\in \mathbf{I}}e_{i_1}\wedge e_{i_2}\wedge\cdots\wedge e_{i_{s-1}}\wedge Ae_{i_s}\wedge e_{i_{s+1}}\wedge\cdots\wedge e_{i_\mathscr{P}}\\
=&\sum _{i_s\in \mathbf{I}}e_{i_1}\wedge e_{i_2}\wedge\cdots\wedge e_{i_{s-1}}\wedge \sum_{m=1}^na_{i_sm}e_m \wedge e_{i_{s+1}}\wedge\cdots\wedge e_{i_\mathscr{P}}\\
=&\sum _{i_s\in \mathbf{I}}e_{i_1}\wedge e_{i_2}\wedge\cdots\wedge e_{i_{s-1}}\wedge \left(\sum_{m\notin\{i_1,i_2,\cdots,i_{s-1},i_{s+1},\cdots,i_\mathscr{P}\}}a_{i_sm}e_m \right)\wedge e_{i_{s+1}}\wedge\cdots\wedge e_{i_\mathscr{P}}\\
=&\sum _{i_s\in \mathbf{I}}e_{i_1}\wedge e_{i_2}\wedge\cdots\wedge e_{i_{s-1}}\wedge (a_{i_si_s}e_{i_s}+\sum_{m\notin \mathbf{I}}a_{i_sm}e_m) \wedge e_{i_{s+1}}\wedge\cdots\wedge e_{i_\mathscr{P}}\\
=&\sum _{i_s\in \mathbf{I}}a_{i_si_s}e_{\mathbf{I}}+\sum _{\{i_s|i_s\in \mathbf{I}\}}\sum _{\{m|m\notin \mathbf{I}\}}a_{i_sm}e_{i_1}\wedge e_{i_2}\wedge\cdots\wedge e_{i_{s-1}}\wedge e_{m}\wedge e_{i_{s+1}}\wedge\cdots\wedge e_{i_\mathscr{P}}\\
=&\sum _{i\in \mathbf{I}}a_{ii}e_{\mathbf{I}}+\sum _{i\in \mathbf{I}}\sum _{j\notin \mathbf{I}}a_{ij}\sigma (i,\mathbf{I}-i)\sigma(j,\mathbf{I}-i)e_{\mathbf{I}-i+j}.
\end{align*}

Then the formulas follows from direct calculations.
\end{proof}

\begin{proposition}\label{L1}
Let $\lambda=(\lambda_1,\cdots,\lambda_n)\in \Gamma_{n}$ be the eigenvalues of the matrix $\{a_{ij}\}$,
then $-\left[\frac{\sigma_{k}(W)}{\sigma_{l}(W)}\right]^{-\frac{1}{k-l}}$ is ``inverse convexity'' with respect to the matrix
$\{a_{ij}\}$, that is, for any $n \times n$ symmetric matrix $\{\xi_{ij}\}$,
\begin{eqnarray*}\label{in-1}
\sum_{i,j,r,s=1}^{n}\left(\frac{\partial^{2}\big(-\left[\frac{\sigma_{k}(W)}{\sigma_{l}(W)}\right]^{-\frac{1}{k-l}}\big)}{\partial a_{ij}\partial a_{rs}}+2\frac{\partial\big(-\left[\frac{\sigma_{k}(W)}{\sigma_{l}(W)}\right]^{-\frac{1}{k-l}}\big)}{\partial a_{ir}}a^{js}\right)\xi_{ij}\xi_{rs}\geq 0,
\end{eqnarray*}
where $\{a^{ij}\}=\{a_{ij}\}^{-1}$.
\end{proposition}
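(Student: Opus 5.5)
The plan is to recognize the quadratic form in Proposition \ref{L1} as the second variation of the composition of $F$ with matrix inversion, where
$$F(A):=-\left[\frac{\sigma_{k}(W)}{\sigma_{l}(W)}\right]^{-\frac{1}{k-l}}$$
and $W$ is the matrix of $\mathcal{D}_A$ from Proposition \ref{p2}. This turns the statement into an ordinary convexity statement, which can then be checked at the level of eigenvalues.

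\textbf{Step 1: reduction to convexity of $F(B^{-1})$.} Set $\widetilde F(B):=F(B^{-1})$ for $B$ positive definite. For a symmetric direction $\eta$, the derivatives of the inverse at $B=A^{-1}$ are $d(B^{-1})=-A\eta A$ and $d^2(B^{-1})=2A\eta A\eta A$. Hence
\begin{equation*}
D^2\widetilde F(B)[\eta,\eta]=D^2F(A)[A\eta A,A\eta A]+2\,DF(A)[A\eta A\eta A].
\end{equation*}
Put $\xi:=A\eta A$. Then $A\eta A\eta A=\xi A^{-1}\xi$, so the last term equals
$$2\sum F^{ij}\,\xi_{ir}a^{rs}\xi_{sj}.$$
After relabelling indices this is exactly the left-hand side of the inequality in Proposition \ref{L1}. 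Since $\eta\mapsto A\eta A$ is a bijection of the symmetric matrices, the claim is equivalent to convexity of $\widetilde F$ on positive definite matrices.

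\textbf{Step 2: reduction to eigenvalues.} The eigenvalues of $W$ are $\Lambda(\lambda)$, so $F(A)=f(\lambda(A))$ with $f$ a symmetric function of $\lambda$. Therefore $\widetilde F(B)=\widetilde f(\mu)$, where $\mu=\lambda(B)=1/\lambda$ componentwise. By the standard fact that a symmetric convex function of the eigenvalues defines a convex function of the symmetric matrix (on the positive definite cone), it suffices to show that
$$h(\mu):=\Big[\tfrac{\sigma_k}{\sigma_l}\big(\Lambda(1/\mu)\big)\Big]^{-\frac{1}{k-l}}$$
is concave on $\Gamma_n$.

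\textbf{Step 3: concavity of $h$.} Define $\nu_{\mathbf I}:=1/\Lambda_{\mathbf I}(1/\mu)=\big(\sum_{i\in\mathbf I}\mu_i^{-1}\big)^{-1}$. Each $\nu_{\mathbf I}$ is a scaled harmonic mean of positive numbers, so it is positive, concave and $1$-homogeneous in $\mu$. Concavity holds because $t\mapsto 1/t$ restricted to sums of reciprocals is the perspective-type function whose Hessian is negative semidefinite.

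Next use the identity $\sigma_m(1/\nu)=\sigma_{N-m}(\nu)/\sigma_N(\nu)$ for $\nu\in\Gamma_N\subset\mathbb{R}^N$. It gives
\begin{equation*}
h(\mu)=\left[\frac{\sigma_{N-l}(\nu)}{\sigma_{N-k}(\nu)}\right]^{\frac{1}{k-l}},
\end{equation*}
with $0\le N-k<N-l\le N$ and the exponent $\frac{1}{k-l}=\frac{1}{(N-l)-(N-k)}$. By Proposition (v) of the preliminaries, this quotient is concave on $\Gamma_{N-l}\supset\Gamma_N$. It is also nondecreasing in each $\nu_{\mathbf I}$ there, since its partial derivatives are positive; this follows from (ii)/(iv) and the quotient rule.

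A concave nondecreasing function composed with concave maps is concave. Hence $h$ is concave, so $\widetilde f=-h$ is convex, and Steps 1–2 finish the proof.

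\textbf{Expected obstacle.} The main point needing care is Step 2: lifting eigenvalue convexity to convexity of the matrix function $B\mapsto \widetilde F(B)$. Here I would invoke the classical result (Ball / Lewis, or the argument in Caffarelli–Nirenberg–Spruck) for symmetric concave functions of eigenvalues. It applies because $F$ depends on $A$ only through $\lambda(A)$, as $W$ is a derivation matrix whose spectrum is $\Lambda(\lambda)$.

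A secondary check is the monotonicity in Step 3. For $l\ge 1$ the positivity of $\partial_{\nu_{\mathbf I}}(\sigma_{N-l}/\sigma_{N-k})$ on $\Gamma_N$ can also be seen directly from Newton–Maclaurin type inequalities.
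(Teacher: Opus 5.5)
Your proof is correct, but it is organized differently from the paper's. The paper stays at the matrix level. It cites as well known the inverse convexity of $G(W)=-[\sigma_k(W)/\sigma_l(W)]^{-1/(k-l)}$ in the $N\times N$ matrix $W$, and pulls this back through the linear map $A\mapsto W=\mathcal{D}_A$ at a point where $A$ is diagonal. It then shows, by an explicit index computation based on Proposition \ref{p2}, that the $A^{-1}$-correction $2F^{ir}a^{js}\xi_{ij}\xi_{rs}$ dominates the $W^{-1}$-correction $2\widetilde G^{\mathbf{I}\mathbf{R}}W^{\mathbf{J}\mathbf{S}}X_{\mathbf{I}\mathbf{J}}X_{\mathbf{R}\mathbf{S}}$. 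The diagonal entries are handled by Cauchy--Schwarz, $(\sum_{i\in\mathbf{I}}\xi_{ii})^2/\sum_{i\in\mathbf{I}}a_{ii}\le\sum_{i\in\mathbf{I}}\xi_{ii}^2/a_{ii}$. The off-diagonal entries $W_{\mathbf{I},\mathbf{I}-i+j}=\pm\xi_{ij}$ are handled by $W_{\mathbf{J}\mathbf{J}}\ge a_{jj}$. Everything is weighted by $\widetilde G^{\mathbf{I}\mathbf{I}}>0$.

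You instead lift only once, at the $n\times n$ level, through the Davis--Lewis spectral convexity theorem, and you prove the eigenvalue statement by the factorization $h=g\circ\nu$. This explains the paper's computation:
\begin{itemize}
\item its Cauchy--Schwarz step is exactly the infinitesimal inverse convexity of $-1/\Lambda_{\mathbf{I}}$, i.e. the concavity of your harmonic means $\nu_{\mathbf{I}}$;
\item its positivity of $\widetilde G^{\mathbf{I}\mathbf{I}}$ is your monotonicity of $g$;
\item its off-diagonal bookkeeping is what the spectral theorem does for you.
\end{itemize}
Your route also proves, via $\sigma_m(1/\nu)=\sigma_{N-m}(\nu)/\sigma_N(\nu)$ and item (v) of the first proposition in Section 2, the $W$-level inverse convexity that the paper only asserts. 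In exchange, the paper's route needs no lifting theorem beyond the cited $W$-level fact, and it treats the off-diagonal directions by hand through the explicit structure of $\mathcal{D}_A$.

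Two justifications should be tightened.
\begin{itemize}
\item Positivity of each $\partial g/\partial\nu_{\mathbf{I}}$ does not follow from (ii), (iv) and the quotient rule, since (iv) controls only the sum of the partials. Use instead the Newton--Maclaurin inequality $\sigma_{m-1}\sigma_j\ge\sigma_m\sigma_{j-1}$ ($m>j$) for the vector with $\nu_{\mathbf{I}}$ removed. Alternatively, note that a positive concave function on $\Gamma_{N-l}$ is nondecreasing along every direction in $\overline{\Gamma_N}$; nondecreasing is all the composition rule needs.
\item Concavity of $\nu_{\mathbf{I}}$ is cleanest from $\big(\sum_{i\in\mathbf{I}}\mu_i^{-1}\big)^{-1}=\min\{\sum_{i\in\mathbf{I}}w_i^2\mu_i:\ \sum_{i\in\mathbf{I}}w_i=1\}$, which is a minimum of linear functions of $\mu$.
\end{itemize}
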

\begin{proof}
First, it is well known that $-\left[\frac{\sigma_{k}(W)}{\sigma_{l}(W)}\right]^{-\frac{1}{k-l}}$ is ``inverse convexity'' with respect to $W$. Therefore, for any $N \times N$ symmetric matrix $\{\xi_{\mathbf{I}\mathbf{J}}\}$
\begin{align}\label{in-2}
\sum_{\mathbf{I},\mathbf{J},\mathbf{R},\mathbf{S}\in\mathfrak{J}}\left(\frac{\partial^{2}\big(-\left[\frac{\sigma_{k}(W)}{\sigma_{l}(W)}\right]^{-\frac{1}{k-l}}\big)}{\partial W_{\mathbf{I}\mathbf{J}}\partial W_{\mathbf{R}\mathbf{S}}}+2\frac{\partial\big(-\left[\frac{\sigma_{k}(W)}{\sigma_{l}(W)}\right]^{-\frac{1}{k-l}}\big)}{\partial W_{\mathbf{I}\mathbf{R}}}W^{\mathbf{J}\mathbf{S}}\right)\xi_{\mathbf{I}\mathbf{J}}\xi_{\mathbf{R}\mathbf{S}}\geq0,
\end{align}
where $\{W^{\mathbf{J}\mathbf{S}}\}=\{W_{\mathbf{J}\mathbf{S}}\}^{-1}$. For convenience, we introduce the following notation
\begin{eqnarray*}
\begin{aligned}
\widetilde{G}^{\mathbf{I}\mathbf{J}}=\frac{\partial\big(-\left[\frac{\sigma_{k}(W)}{\sigma_{l}(W)}\right]^{-\frac{1}{k-l}}\big)}{\partial W_{\mathbf{I}\mathbf{J}}}, \quad \widetilde{G}^{\mathbf{I}\mathbf{J}, \mathbf{R}\mathbf{S}}=\frac{\partial^2 \big(-\left[\frac{\sigma_{k}(W)}{\sigma_{l}(W)}\right]^{-\frac{1}{k-l}}\big)}{\partial W_{\mathbf{I}\mathbf{J}} \partial W_{\mathbf{R}\mathbf{S}}}.\\
\end{aligned}
\end{eqnarray*}
Assume that $\{a_{ij}\}$ is diagonal. Then $W$ is also diagonal. Combining the above notation with \eqref{in-2} we obtain
\begin{align*}
&\sum_{i,j,r,s=1}^{n}\left(\frac{\partial^{2}\big(-\left[\frac{\sigma_{k}(W)}{\sigma_{l}(W)}\right]^{-\frac{1}{k-l}}\big)}{\partial a_{ij}\partial a_{rs}}
+2\frac{\partial\big(-\left[\frac{\sigma_{k}(W)}{\sigma_{l}(W)}\right]^{-\frac{1}{k-l}}\big)}{\partial a_{ir}}a^{js}\right)\xi_{ij}\xi_{rs} \\
=&\sum_{\mathbf{I},\mathbf{J},\mathbf{R},\mathbf{S}\in \mathfrak{J}}\widetilde{G}^{\mathbf{I}\mathbf{J},\mathbf{R}\mathbf{S}}\left(\sum_{i,j=1}^{n}\frac{\partial W_{\mathbf{I}\mathbf{J}}}{\partial a_{ij}}\xi_{ij}\right)\left(\sum_{r,s=1}^{n}\frac{\partial W_{\mathbf{R}\mathbf{S}}}{\partial a_{rs}}\xi_{rs}\right)+2\sum_{i,j,r=1}^{n}\sum_{\mathbf{P},\mathbf{Q}\in \mathfrak{J}}\widetilde{G}^{\mathbf{P}\mathbf{Q}}\frac{\partial W_{\mathbf{P}\mathbf{Q}}}{\partial a_{ir}}\frac{\xi_{ij}\xi_{rj}}{a_{jj}}\\
\geq&-2\sum_{\mathbf{I},\mathbf{J},\mathbf{R},\mathbf{S}\in \mathfrak{J}}\widetilde{G}^{\mathbf{I}\mathbf{R}}
\left(\sum_{i,j=1}^{n}\frac{\partial W_{\mathbf{I}\mathbf{J}}}{\partial a_{ij}}\xi_{ij}\right)\left(\sum_{r,s=1}^{n}\frac{\partial W_{\mathbf{R}\mathbf{S}}}{\partial a_{rs}}\xi_{rs}\right)W^{\mathbf{J}\mathbf{S}}+2\sum_{i,j,r=1}^{n}\sum_{\mathbf{P}\in \mathfrak{J}}\widetilde{G}^{\mathbf{P}\mathbf{P}}
\frac{\partial W_{\mathbf{P}\mathbf{P}}}{\partial a_{ir}}\frac{\xi_{ij}\xi_{rj}}{a_{jj}}\\
=&-2\sum_{\mathbf{I},\mathbf{J}\in \mathfrak{J}}\widetilde{G}^{\mathbf{I}\mathbf{I}}\frac{\left(\sum_{i,j=1}^{n}\frac{\partial W_{\mathbf{I}\mathbf{J}}}{\partial a_{ij}}\xi_{ij}\right)^2}{W_{\mathbf{J}\mathbf{J}}}
+2\sum_{i,j=1}^{n}\sum_{\mathbf{I}\in \mathfrak{J}}\widetilde{G}^{\mathbf{I}\mathbf{I}}
\frac{\partial W_{\mathbf{I}\mathbf{I}}}{\partial a_{ii}}\frac{\xi_{ij}^{2}}{a_{jj}}\\
=&2\sum_{\mathbf{I}\in \mathfrak{J}} \widetilde{G}^{\mathbf{I}\mathbf{I}}\left\{-\frac{\left({\sum_{i=1}^{n}\frac {\partial W_{\mathbf{I}\mathbf{I}}}{\partial a_{ii}}\xi_{ii}}\right)^2}{{W_{\mathbf{I}\mathbf{I}}}}
-\sum _{|\mathbf{J}\cap \mathbf{I}|=\mathscr{P}-1}\frac{\left({\sum_{i,j=1}^{n}\frac {\partial W_{\mathbf{I}\mathbf{I}}}{\partial a_{ij}}\xi_{ij}}\right)^2}{W_{\mathbf{J}\mathbf{J}}}
+2\sum_{i,j=1}^{n}\frac {\partial W_{\mathbf{I}\mathbf{I}}}{\partial a_{ii}}\frac {\xi_{ij}^2}{a_{jj}}\right\}\\
=&2\sum_{\mathbf{I}\in \mathfrak{J}} \widetilde{G}^{\mathbf{I}\mathbf{I}}\left\{-\frac{\left(\sum_{i\in \mathbf{I}}\xi_{ii}\right)^2}{\sum_{i\in \mathbf{I}}a_{ii}}-\sum_{i\in \mathbf{I}}\sum_{j\in \overline{\mathbf{I}}}\frac{\xi_{ij}^2}{\sum_{s\in \mathbf{I}-i+j}a_{ss}}
+\sum_{i\in \mathbf{I}}\frac{\xi_{ii}^2}{a_{ii}}+\sum_{i\in \mathbf{I}}\bigg(\sum_{j\in {\mathbf{I}-i}}\frac{\xi_{ij}^2}{a_{jj}}+\sum_{j\in {\overline{\mathbf{I}}}}\frac{\xi_{ij}^2}{a_{jj}}
\bigg)\right\}\\
\geq&2\sum_{\mathbf{I}\in \mathfrak{J}}\widetilde{G}^{\mathbf{I}\mathbf{I}}\left\{-\frac{\left(\sum_{i\in \mathbf{I}}\frac{\xi_{ii}}{\sqrt{a_{ii}}}\sqrt{a_{ii}}\right)^2}{\sum_{i\in \mathbf{I}}a_{ii}}+\sum_{i\in \mathbf{I}}\frac{\xi_{ii}^2}{a_{ii}}\right\}\\
\geq&2\sum_{\mathbf{I}\in \mathfrak{J}} \widetilde{G}^{\mathbf{I}\mathbf{I}}\left\{-\frac{\sum_{i\in \mathbf{I}}\frac{\xi_{ii}^2}{{a_{ii}}}\sum_{i\in \mathbf{I}}{a_{ii}}}{\sum_{i\in \mathbf{I}}a_{ii}}+\sum_{i\in \mathbf{I}}\frac{\xi_{ii}^2}{a_{ii}}\right\}\\
\geq&0,
\end{align*}
where we apply Proposition \ref{p1} and Proposition \ref{p2} in step 2 and step 3, respectively, completes the proof.
\end{proof}

%%%%%%%%%%%%%%%%%%%%%%%%%%%%%%%%%%%%%%%%%%%%%%%%%%%%%%%%%%%%%%%%%%%%%%%%%%%%%%%%%%%%%%%%%%%%%%%%%%%%%%%%%%%%%%%%%%%%%%%%%%%%%%%%%%%%%%%%%%%%%%%%%%%%%%%%%%%%%%%%%%%%
\section{Full rank theorem}
%%%%%%%%%%%%%%%%%%%%%%%%%%%%%%%%%%%%%%%%%%%%%%%%%%%%%%%%%%%%%%%%%%%%%%%%%%%%%%%%%%%%%%%%%%%%%%%%%%%%%%%%%%%%%%%%%%%%%%%%%%%%%%%%%%%%%%%%%%%%%%%%%%%%%%%%%%%%%%%%%%%%

This section is devoted to proving the full rank theorem for equation (1.3). The theorem plays a fundamental role in establishing the existence of strictly spherically convex solutions, since a particular form of it will be used to ensure that convexity is preserved throughout the continuity method.
\begin{lemma}\label{Full-rank-1}
Let $u$ be a positive admissible solution of the equation \eqref{1123} such that $\{u_{ij}+u\delta_{ij}\}$ is positive semidefinite on $\mathbb{S}^{n}$. Suppose that $\varphi$ is a smooth positive function satisfying one of the following conditions:
\begin{enumerate}
\item[(\romannumeral1)] if $p\geq1$, $q\leq k+1$, $(\varphi ^{-\frac{1}{k-l+p-1}})_{ii}+\varphi ^{-\frac{1}{k-l+p-1}}\geq0$;

\item[(\romannumeral2)] if $p\geq1$, $k+1< q <2k-l+p$, $(\varphi ^{-\frac{1}{k-l+p-1}})_{ii}+\frac{2k-l+p-1}{k-l+p-1}\varphi ^{-\frac{1}{k-l+p-1}}\geq0$.
\end{enumerate}
Then $\{u_{ij}+u\delta_{ij}\}$ is positive definite on $\mathbb{S}^{n}$.
\end {lemma}
\begin{proof}
Define $A=\{u_{ij}+u\delta_{ij}\}$. Suppose that $A$ attains its minimal rank $m$ at some point $x_{0}\in\mathbb{S}^{n}$, Then
$$\sigma_{m}(A)(x_{0})>0, \quad   \sigma_{m+1}(A)(x_{0})=0.$$

Then there exists an open neighborhood $\mathcal {O}\subset\mathbb{S}^{n}$ of $x_{0}$ and a positive constant $c$ such that  $$\sigma_{m}(A)(x_{0})\geq c>0,\quad in\,\mathcal {O}.$$
Without loss of generality, we assume $m\leq n-1$, otherwise the proof is complete. Let $\lambda=(\lambda _{1},\cdot\cdot\cdot,\lambda _{n} )$ denote the eigenvalues of $A$. Fix a point \(x\in \mathcal O\). Choose a local orthonormal frame around $x$ such that $A$ is diagonal at $x$, with $a_{11}\le a_{22}\le \cdots \le a_{nn}.$ All the computations below are carried out at this fixed point. With this setup, we define the test function
$$\phi (x)=\sigma_{m+1}(A)+\frac{\sigma_{m+2}(A)}{\sigma_{m+1}(A)}.$$
It is proved in section 2 in \cite{BP} that $\phi$ is in $C^{1,1}$.
Adopting the notations of \cite{P}, we write $h(y)\lesssim k(y)$ if there exist positive constants $c_{1}$ and $c_{2}$ such that
$$(h-k)(y)\lesssim(c_{1}|\nabla\phi|+c_{2}\phi)(y),$$
with $h(y)\sim k(y)$ meaning that
$$h(y)\lesssim k(y),\quad
k(y)\lesssim h(y).$$
Define the index sets for the eigenvalues as $B=\{1,\cdot\cdot\cdot,n-m\}$ and $G=\{n-m+1,n-m+2,\cdot\cdot\cdot,n\}$. Their corresponding eigenvalues, $\lambda_{B}=( \lambda_{1},\cdot\cdot\cdot.\lambda_{n-m} )$ and $\lambda_{G}=(\lambda_{n-m+1},\cdot\cdot\cdot,\lambda_{n} )$ are referred to as the bad and good eigenvalues of $A$. When clear from context, we identify $B$ with $\lambda_{B}$  and $G$ with $\lambda_{G}$. This yields
$$0\sim\phi(x)\sim \sigma_{m+1}(A)\sim \sigma_{m}(G)\sigma_{1}(B)\sim \sigma_{1}(B).$$
For the convenience of calculation, equation \eqref{1123} can be rewritten as
\begin{equation}\label{eq3210}
\begin{split}
\widehat{F}:=-[\frac{\sigma_k(\Lambda(\nabla^{2}u+uI))}{\sigma_l(\Lambda(\nabla^{2}u+uI))}]^{-\frac{1}{k-l}}=u^{t}(u+|\nabla u|^{2})^{\frac{s}{2}}\widehat{\varphi}(x):=\widehat{f},
\end{split}
\end{equation}
where $t=-\frac{p-1}{k-l}$,$s=-\frac{k+1-q}{k-l}$ and $\widehat{\varphi}(x)=-\varphi^{-\frac{1}{k-l}}(x)$.
We denote
$$\widehat{F}^{ij}=\frac{\partial \widehat{F}}{\partial a_{ij}},\quad \widehat{F}^{ij,rs}=\frac{\partial^{2} \widehat{F}}{\partial a_{ij}\partial a_{rs}}, \quad \widehat{f}_{i}=\frac{\partial \widehat{f}}{\partial x_{i}},\quad \widehat{f}_{ij}=\frac{\partial^{2} \widehat{f}}{\partial x_{i}\partial x_{j}}.$$
Differentiating equation \eqref{eq3210} twice yields
$$\widehat{F}^{\alpha\beta}a_{\alpha \beta i}=\widehat{f}_{i},\quad \widehat{F}^{\alpha\beta}a_{\alpha \beta ii}+\widehat{F}^{\alpha\beta,rs}a_{\alpha \beta i}a_{rs i}=\widehat{f}_{ii}.$$
It follows from Theorem 3.2 in \cite{BP} that
\begin{equation}\label{eq3211}
\begin{split}
\widehat{ F}^{\alpha\beta}\phi_{\alpha\beta}=
 &\mathcal {O}(\phi+\sum_{i,j\in B}|\nabla a_{ij}|)-\frac{1}{\sigma_{1}(B)}\sum_{\alpha}\sum_{i\neq j\in B}\widehat{F}^{\alpha\alpha}a_{ij\alpha}^{2} \\
 &-\frac{1}{\sigma_{1}(B)^{3}} \sum_{\alpha} \sum_{i\in B} \widehat{F}^{\alpha \alpha} (a_{ii\alpha}{\sigma_{1}(B)}-a_{ii}\sum_{j \in B}a_{jj\alpha})^{2} \\
 &-2\sum_{i\in B}(\sigma_{m}(G)+\frac{\sigma_{1}(B|i)^{2}-\sigma_{2}(B|i)}{\sigma_{1}(B)^{2}})\sum_{\alpha,j\in G}\widehat{F}^{\alpha\alpha}\frac{a_{j\alpha i}^{2}}{a_{jj}} \\
 &+\sum_{i\in B}(\sigma_{m}(G)+\frac{\sigma_{1}(B|i)^{2}-\sigma_{2}(B|i)}{\sigma_{1}(B)^{2}})\sum_{\alpha}\widehat{F}^{\alpha\alpha}a_{ii\alpha\alpha}.
\end{split}
\end{equation}
For any $ i \in B $, the Ricci identity gives
\begin{equation}\label{eq3212}
\begin{split}
\widehat{F}^{\alpha\alpha}a_{ii\alpha\alpha}&=\widehat{F}^{\alpha\alpha}(a_{\alpha\alpha ii}+a_{ii}-a_{\alpha\alpha})\\
&=\widehat{F}^{\alpha\alpha}a_{\alpha\alpha ii}+\widehat{F}^{\alpha\alpha}a_{ii}-\widehat{F}^{\alpha\alpha}a_{\alpha\alpha}\\
&=\widehat{f}_{ii}-\widehat{F}^{\alpha\beta,rs}a_{\alpha\beta i}a_{rsi}+\widehat{f}+\mathcal {O}(\phi)\\
&=-\sum_{\alpha,\beta,r,s\in G}\widehat{F}^{\alpha\beta,rs}a_{\alpha\beta i}a_{rsi}+\widehat{f}_{ii}+\widehat{f}+\mathcal {O}(\phi+\sum_{i,j\in B}|\nabla a_{ij}|).\\
\end{split}
\end{equation}
We claim that
\begin{equation}\label{eq3213}
\begin{split}
\sum_{i\in B}(\widehat{f}_{ii}+\widehat{f})\leq \mathcal {O}(\phi+\sum_{i,j\in B}|\nabla a_{ij}|).
\end{split}
\end{equation}
By direct computation, we obtain
\begin{eqnarray*}
\begin{split}
\sum_{i\in B}(\widehat{f}_{ii}+\widehat{f})=&\sum_{i\in B}(\widehat{f}_{x_{i}x_{i}}+2\widehat{f}_{x_{i} z}u_{i}+\widehat{f}_{zz}u_{i}^{2})+\sum_{k}\sum_{i\in B}\widehat{f}_{p_{k}}u_{iik}+\sum_{i\in B}\widehat{f}\\
&+\sum_{i\in B}u_{ii}(2\widehat{f}_{x_{i}p_{i}}+\widehat{f}_{z}+2\widehat{f}_{zp_{i}}u_{i}+\widehat{f}_{p_{i}p_{i}}u_{ii}).\\
\end{split}
\end{eqnarray*}
The relation $a_{ii}=u_{ii}+u=\mathcal {O}(\phi)$ implies that
\begin{align*}
\sum_{i\in B}(\widehat{f}_{ii}+\widehat{f})=\,&\mathcal {O}(\phi+ \sum_{i,j\in B}| \nabla a_{ij}|)+\sum_{i\in B}(\widehat{f}_{x_{i}x_{i}}+2\widehat{f}_{x_{i} z}u_{i}+\widehat{f}_{zz}u_{i}^{2}\\
&-2\widehat{f}_{x_{i}p_{i}}u-\widehat{f}_{z}u-2\widehat{f}_{zp_{i}}uu_{i}+\widehat{f}_{p_{i}p_{i}}u^{2}-\sum_{k}\widehat{f}_{p_{k}}u_{k}+\widehat{f}).
\end{align*}
Denote
\begin{align*}
H:=\sum_{i\in B}(&\widehat{f}_{x_{i}x_{i}}+2\widehat{f}_{x_{i}z}u_{i}+\widehat{f}_{zz}u_{i}^{2}-2\widehat{f}_{x_{i}p_{i}}u-\widehat{f}_{z}u\\
&-2\widehat{f}_{zp_{i}}uu_{i}+\widehat{f}_{p_{i}p_{i}}u^{2}-\sum_{k}\widehat{f}_{p_{k}}u_{k}+\widehat{f}),
\end{align*}
To establish the claim, it is enough to prove $H\leq0$. After a lengthy but straightforward computation, one obtains
\begin{equation}\label{eq3214}
\begin{split}
H=\,&u^{t}(u^{2}+|\nabla u|^{2})^{\frac{s}{2}}\sum_{i\in B}\bigg(\widehat{\varphi}_{ii}+\frac{2tu_{i}\widehat{\varphi}_{i}}{u}+\frac{t(t-1)u_{i}^{2}\widehat{\varphi}}{u^{2}}\\
&+\frac{su_{i}^{2}\widehat{\varphi}}{u^{2}+|\nabla u|^{2}}-\frac{s\sum_{k}u_{k}^{2}}{u^{2}+|\nabla u|^{2}}+(1-t)\widehat{\varphi}\bigg),
\end{split}
\end{equation}
where $t,\,s,\,\widehat{\varphi}$ are defined in \eqref{eq3210}. Note that
\begin{equation}\label{eq3215}
\begin{split}
\frac{2tu_{i}\widehat{\varphi}_{i}}{u}+\frac{t(t-1)u_{i}^{2}\widehat{\varphi}}{u^{2}}\leq\frac{t\widehat{\varphi}_{i}^{2}}{(1-t)\widehat{\varphi}},
\end{split}
\end{equation}
for $t\leq0$ or $t>1$. Next we continue the proof with two cases.\\
$Case\,1 :t\leq0,\,s\leq0,\, i.e.,\,p\geq1,q\leq k+1.$\\
From $m\leq n-1$, it follows that
\begin{eqnarray}\label{eq3216}
\begin{split}
\sum_{i\in B}\bigg(\frac{su_{i}^{2}\widehat{\varphi}}{u^{2}+|\nabla u|^{2}}-\frac{s\sum_{k}u_{k}^{2}\widehat{\varphi}}{u^{2}+|\nabla u|^{2}}\bigg)&=\frac{s\widehat{\varphi}\sum_{i\in B}u_{i}^{2}}{u^{2}+|\nabla u|^{2}}-\frac{s(n-m)\widehat{\varphi}\sum_{k}u_{k}^{2}}{u^{2}+|\nabla u|^{2}}\\
&\leq\frac{s\widehat{\varphi}\sum_{i\in B}u_{i}^{2}}{u^{2}+|\nabla u|^{2}}-\frac{s\widehat{\varphi}\sum_{k}u_{k}^{2}}{u^{2}+|\nabla u|^{2}}\\
&=-\frac{s\widehat{\varphi}\sum_{i\in G}u_{i}^{2}}{u^{2}+|\nabla u|^{2}}\\
&\leq0.
\end{split}
\end{eqnarray}
Combining with \eqref{eq3214} ,\eqref{eq3215}, \eqref{eq3216} and assumption (1) on $\varphi$ yields
\begin{eqnarray*}
\begin{split}
H\leq& u^{t}(u^{2}+|\nabla u|^{2})^{\frac{s}{2}}\sum_{i\in B}\bigg(\widehat{\varphi}_{ii}+\frac{t\widehat{\varphi}_{i}^{2}}{(1-t)\widehat{\varphi}}+(1-t)\widehat{\varphi}\bigg)\\
=&-\frac{k-l+p-1}{k-l}u^{-\frac{p-1}{k-l}}(u^{2}+|\nabla u|^{2})^{-\frac{k+1-q}{2(k-l)}}\varphi^{-\frac{p-1}{(k-l)(k-l+p-1)}}\\
&\cdot\sum_{i\in B}\bigg((\varphi^{-\frac{1}{k-l+p-1}})_{ii}+(\varphi^{-\frac{1}{k-l+p-1}})\bigg) \\
\leq&0.
\end{split}
\end{eqnarray*}
$Case\,2 :t\leq0,\,s>0,\,i.e.,\,p\geq1,q> k+1.$
\begin{align} \label{eq3217}
\sum_{i\in B}\bigg(\frac{su_{i}^{2}\widehat{\varphi}}{u^{2}+|\nabla u|^{2}}-\frac{s\sum_{k}u_{k}^{2}\widehat{\varphi}}{u^{2}+|\nabla u|^{2}}\bigg)\leq-s\sum_{i\in B}\widehat{\varphi}.
\end{align}
By combining \eqref{eq3214} ,\eqref{eq3215}, \eqref{eq3217} and assumption (2) on $\varphi$, we obtain
\begin{eqnarray*}
\begin{split}
H\leq& u^{t}(u^{2}+|\nabla u|^{2})^{\frac{s}{2}}\sum_{i\in B}\bigg(\widehat{\varphi}_{ii}+\frac{t\widehat{\varphi}_{i}^{2}}{(1-t)\widehat{\varphi}}+(1-t-s)\widehat{\varphi}\bigg)\\
=&-\frac{k-l+p-1}{k-l}u^{-\frac{p-1}{k-l}}(u^{2}+|\nabla u|^{2})^{-\frac{k+1-q}{2(k-l)}}\varphi^{-\frac{p-1}{(k-l)(k-l+p-1)}}\\
&\cdot\sum_{i\in B}\bigg((\varphi^{-\frac{1}{k-l+p-1}})_{ii}+\frac{2k-l+p-q}{k-l+p-1}(\varphi^{-\frac{1}{k-l+p-1}})\bigg) \\
\leq&0.
\end{split}
\end{eqnarray*}
Hence claim \eqref{eq3213} is proved. It follows from \eqref{eq3211}, \eqref{eq3212} and \eqref{eq3213} that
\begin{equation}\label{eq3220}
\begin{split}
\widehat{F}^{\alpha\beta}\phi_{\alpha\beta} \leq &\mathcal {O}(\phi+\sum_{i,j\in B}|\nabla a_{ij}|)-\frac{1}{\sigma_{1}(B)}\sum_{\alpha}\sum_{i\neq j\in B}\widehat{F}^{\alpha\alpha}a_{ij\alpha}^{2}\\
&-\frac{1}{\sigma_{1}(B)^{3}} \sum_{\alpha} \sum_{i\in B} \widehat{F}^{\alpha \alpha} \bigg(a_{ii\alpha}{\sigma_{1}(B)}-a_{ii}\sum_{j \in B}a_{jj\alpha}\bigg)^{2} \\
&-\sum_{i\in B}\bigg(\sigma_{m}(G)+\frac{\sigma_{1}(B|i)^{2}-\sigma_{2}(B|i)}{\sigma_{1}(B)^{2}}\bigg)\\
&\cdot\bigg(2\sum_{\alpha,j\in G}\widehat{F}^{\alpha\alpha}\frac{a_{j\alpha i}^{2}}{a_{jj}}+\sum_{\alpha,\beta,r,s\in G}\widehat{F}^{\alpha\beta,rs}a_{\alpha\beta i}a_{rsi}\bigg).
\end{split}
\end{equation}
By proposition \ref{L1} we know that $\widehat{F}(W^{-1})$ is inverse convex with respect to $\{a_{ij}\}$. It is equivalent to the following inequality
$$\widehat{F}^{\alpha\beta,rs}X_{\alpha\beta}X_{rs}+2\frac{\widehat{F}^{\alpha r}}{a_{\beta s}} X_{\alpha\beta} X_{rs} \geq0,\quad \forall (X_{\alpha\beta})\in Sym(n).$$
Taking
 \[
\begin{cases}
X_{\alpha\beta}=-a_{\alpha\beta i} \quad \alpha,\beta\in G\\
X_{\alpha\beta}=0 \quad \quad  otherwise,
\end{cases}
\]
then we have
\begin{equation}\label{eq3221}
\begin{split}
2\sum_{\alpha,j\in G}\widehat{F}^{\alpha\alpha}\frac{a_{j\alpha i}^{2}}{a_{jj}}+\sum_{\alpha,\beta,r,s\in G}\widehat{F}^{\alpha\beta,rs}a_{\alpha\beta i}a_{rsi}\geq0.
\end{split}
\end{equation}
The combination of \eqref{eq3220} and \eqref{eq3221} leads to the conclusion that
\begin{equation*}
\begin{split}
\widehat{F}^{\alpha\beta}\phi_{\alpha\beta} \leq &\mathcal {O}(\phi+\sum_{i,j\in B}|\nabla a_{ij}|)-\frac{1}{\sigma_{1}(B)}\sum_{\alpha}\sum_{i\neq j\in B}\widehat{F}^{\alpha\alpha}a_{ij\alpha}^{2}\\
&\quad-\frac{1}{\sigma_{1}(B)^{3}} \sum_{\alpha} \sum_{i\in B} \widehat{F}^{\alpha \alpha} \bigg(a_{ii\alpha}{\sigma_{1}(B)}-a_{ii}\sum_{j \in B}a_{jj\alpha}\bigg)^{2} \\
\leq& \mathcal {O}(\phi+\sum_{i,j\in B}|\nabla a_{ij}|)\\
\leq& \mathcal {O}(\phi+|\nabla\phi|).
\end{split}
\end{equation*}
The strong minimum principle gives $\phi\equiv0 $ in $\mathcal {O}$, making $\{x:\phi=0\}$ open and closed, and so $\phi\equiv0$ on $\mathbb{S}^{n}$. Thus, $A=\{u_{ij}+u\delta\mathcal{}_{ij}\}$ is of constant rank. The Minkowski integral formula then forces this rank to be full, proving the theorem.
\end {proof}
%%%%%%%%%%%%%%%%%%%%%%%%%%%%%%%%%%%%%%%%%%%%%%%%%%%%%%%%%%%%%%%%%%%%%%%%%%%%%%%%%%%%%%%%%%%%%%%%%%%%%%%%%%%%%%%%%%%%%%%%%%%%%%%%%%%%%%%%%%%%%%%%%%%%%%%%%%%%%%%%%%%%
\section{the nonhomogeneous case $p>q-l$}
%%%%%%%%%%%%%%%%%%%%%%%%%%%%%%%%%%%%%%%%%%%%%%%%%%%%%%%%%%%%%%%%%%%%%%%%%%%%%%%%%%%%%%%%%%%%%%%%%%%%%%%%%%%%%%%%%%%%%%%%%%%%%%%%%%%%%%%%%%%%%%%%%%%%%%%%%%%%%%%%%%%%
In this section, we derive the a priori estimates and establish the existence and uniqueness of solutions to the equation \eqref{1123} in the nonhomogeneous case $p>q-l$.
\subsection{$C^{0}$ estimate}
\begin {theorem}\label{L5}
Let $0\leq l<k \leq N$, $n\geq2$, $\mathscr{P}>1$, and $p> q-l$. Let $\varphi\in C^{0}(\mathbb{S}^{n})$ be a positive function. Suppose that $u\in C^{2}(\mathbb{S}^{n})$ is a positive admissible solution of the equation \eqref{1123}. Then
$$\frac{C_{N}^{k}}{C_{N}^{l}}\frac{\mathscr{P}^{k-l}}{\max_{\mathbb{S}^{n}}\varphi}\leq u^{p-q+l}(x)
\leq\frac{C_{N}^{k}}{C_{N}^{l}}\frac{\mathscr{P}^{k-l}}{\min_{\mathbb{S}^{n}}\varphi},\quad \forall x\in\mathbb{S}^{n}.$$
\end {theorem}
\begin {proof}
Assume that $\min _{\mathbb{S}^{n}}u(x)$ is attained at $x_{0}$. Then, at $x_{0}$,
$$|\nabla u|=0,\quad \quad \nabla^{2}u\geq0,$$
 which implies that
$\lambda_i(\nabla^{2}u+uI)\geq u$ and $\Lambda_{\mathbf{J}}\geq \mathscr{P}u$
for any $i=1,\cdots, n$ and $\mathbf{J}\in \mathfrak{I}$. It follows from equation \eqref{1123} that
$$u^{p-1}(u^{2}+|\nabla u|^{2})^{\frac{k+1 -q}{2}}\varphi=\frac{\sigma_k(\Lambda)}{\sigma_l(\Lambda)}\geq\frac{C_{N}^{k}}{C_{N}^{l}}\mathscr{P}^{k-l}u^{k-l},$$
using $|\nabla u|=0$ yields
$$u^{p-q+l}(x_{0})\geq \frac{C_{N}^{k}}{C_{N}^{l}}\frac{\mathscr{P}^{k-l}}{\varphi(x_{0})}\geq\frac{C_{N}^{k}}{C_{N}^{l}}\frac{\mathscr{P}^{k-l}}{\max_{\mathbb{S}^{n}}\varphi}.$$
Similarly, suppose $\max_{\mathbb{S}^{n}}u(x)$ is attained at $x_{1}$.  A simple derivation reveals that
$$u^{p-1}(u^{2}+|\nabla u|^{2})^{\frac{k+1-q}{2}}\varphi=\frac{\sigma_k(\Lambda)}{\sigma_l(\Lambda)}\leq\frac{C_{N}^{k}}{C_{N}^{l}}\mathscr{P}^{k-l}u^{k-l},$$
and hence
$$u^{p-q+l}(x_{1})\leq \frac{C_{N}^{k}}{C_{N}^{l}}\frac{\mathscr{P}^{k-l}}{\varphi(x_{1})}\leq\frac{C_{N}^{k}}{C_{N}^{l}}\frac{\mathscr{P}^{k-l}}{\min_{\mathbb{S}^{n}}\varphi}.$$
\end {proof}

\subsection{$C^{1}$ estimate}
\begin {theorem}\label{L9}
Let $0\leq l<k \leq N$, $n\geq2$, $\mathscr{P}>1$, and $p> q-l$. Let $\varphi\in C^{1}(\mathbb{S}^{n})$ be a positive function. Suppose that $u\in C^{3}(\mathbb{S}^{n})$ is a positive admissible solution of the equation \eqref{1123}. Then
$$\frac{|\nabla u(x)|}{u(x)}\leq\frac{1}{p-q+l}\max_{\mathbb{S}^{n}}\frac{|\nabla\varphi|}{\varphi},\quad \forall x\in\mathbb{S}^{n}.$$
\end {theorem}
\begin {proof}
For the positive admissible solution $u$, set $v=\log u$. Then it suffices to estimate $|\nabla v|$, since $|\nabla v|=\frac{|\nabla u|}{u}$. A simple calculation shows that
\begin{equation}\label{32111}
\begin{split}
 a_{ij}:=u_{ij}+u\delta_{ij}=e^{v}(v_{ij}+v_{i}v_{j}+\delta_{ij}).
\end{split}
\end{equation}
Let $\{W_{\mathbf{I}\mathbf{J}}\}_{\mathbf{I}\mathbf{J}\in\mathfrak{J}}$ be the linear derivation of $\{a_{ij}\}$. Denote
\begin{equation*}
\begin{split}
F=\frac{\sigma_{k}(W)}{\sigma_{l}(W)},
\quad F^{ij}=\frac{\partial [\frac{\sigma_{k}(W)}{\sigma_{l}(W)}]}{\partial a_{ij}},\quad F^{\mathbf{I}\mathbf{J}}=\frac{\partial [\frac{\sigma_{k}(W)}{\sigma_{l}(W)}]}{\partial W_{\mathbf{I}\mathbf{J}}}.
\end{split}
\end{equation*}
Consider the test function
$$P=|\nabla v|^{2}.$$
Assume that $P$ attains its maximum at point $x_{0}$. If $P(x_0)=0$, then the desired estimate is trivial. Hence we may assume that $P(x_0)>0$, and choose a local orthonormal frame field such that at $x_{0}$
\begin{equation*}\label{3218}
\begin{split}
v_{1}=|\nabla v|>0,\quad \{v_{ij}\}_{2\leq i,j\leq n} \,\,\mbox{ is diagonal}.
\end{split}
\end{equation*}
In what follows, all the calculations are performed at $x_{0}$. Hence
\begin{equation*}\label{32114}
\begin{split}
0=P_{i}=\nabla_{i}(|\nabla v|^{2})=\sum_{k=1}^{n}2v_{k}v_{ki}=2v_{1}v_{1i},\\
\end{split}
\end{equation*}
and
 \begin{equation*}
\begin{split}
0\geq P_{ii}=2\sum_{k=1}^{n}(v_{k}v_{kii}+v_{ki}^{2}).
\end{split}
\end{equation*}
Given $v_{1}>0$, it follows that $v_{1i}=0$ for $i=1,2,\cdot\cdot\cdot,n.$ Hence $\{v_{ij}\}_{1\leq i,j\leq n}$ is diagonal, and the same is true for $\{a_{ij}\}_{1\leq i,j\leq n}$ and $\{F^{ij}\}_{1\leq i,j\leq n}$. Thus
\begin{equation*}\label{3219}
\begin{split}
0\geq F^{ii}P_{ii}=2v_{1}F^{ii}v_{1ii}+2F^{ii}v_{ii}^{2}\geq2v_{1}F^{ii}v_{1ii},
\end{split}
\end{equation*}
which yields that $F^{ii}v_{1ii}\leq0$. By the Ricci identity,
\begin{equation*}\label{32110}
\begin{split}
v_{1ii}=v_{ii1}+v_{1}-v_{i}\delta_{1i}.
\end{split}
\end{equation*}
Combining this with the above inequality, we obtain
\begin{equation}\label{32112}
\begin{split}
0\geq F^{ii}v_{1ii}=F^{ii}(v_{ii1}+v_{1}-v_{i}\delta_{1i})=F^{ii}v_{ii1}+v_{1}\sum_{i=2}^{n}F^{ii}\geq F^{ii}v_{ii1}.
\end{split}
\end{equation}
On the other hand, using \eqref{32111} and $v_{1i}=0$, we have
\begin{equation}\label{32152}
\begin{split}
a_{ii1}=\nabla_{1}(e^{v}(v_{ii}+v_{i}^{2}+1))=v_{1}a_{ii}+e^{v}v_{ii1}.
\end{split}
\end{equation}
Denote $\Phi=e^{2v}+e^{2v}|\nabla v|^{2}$. Using Proposition \ref{p2} and equation \eqref{1123} yields
\begin{equation}\label{32125}
\begin{split}
F^{ii}a_{ii}=&\sum_{i}\sum_{\mathbf{I}}\frac{\partial F}{\partial W_{\mathbf{I}\mathbf{I}}}\frac{\partial W_{\mathbf{I}\mathbf{I}}}{\partial a_{ii}}a_{ii}
=\sum_{i}\sum_{\{\mathbf{I}\in\mathfrak{J}|i\in \mathbf{I}\}}F^{\mathbf{I}\mathbf{I}}a_{ii}=\sum_{\mathbf{I}}F^{\mathbf{I}\mathbf{I}}W_{\mathbf{I}\mathbf{I}}\\
=&(k-l)F=(k-l)\frac{\sigma_{k}(\Lambda)}{\sigma_{l}(\Lambda)}=(k-l)e^{(p-1)v}\Phi^{\frac{k+1-q}{2}}\varphi,
\end{split}
\end{equation}
and
\begin{equation}\label{32126}
\begin{split}
F^{ii}a_{iij}=\sum_{i}\sum_{\mathbf{I}}\frac{\partial F}{\partial W_{\mathbf{I}\mathbf{I}}}\frac{\partial W_{\mathbf{I}\mathbf{I}}}{\partial a_{ii}}a_{iij}
=\sum_{i}\sum_{\{\mathbf{I}\in\mathfrak{J}|i\in \mathbf{I}\}}F^{\mathbf{I}\mathbf{I}}a_{iij}=\sum_{\mathbf{I}}F^{\mathbf{I}\mathbf{I}}W_{\mathbf{I}\mathbf{I}j}=\nabla_{j}F.
\end{split}
\end{equation}
Taking $j=1$ in equation \eqref{32126} becomes
\begin{equation}\label{32113}
\begin{split}
F^{ii}a_{ii1}=\nabla_{1}(e^{(p-1)v}\Phi^{\frac{k+1-q}{2}}\varphi)
=e^{(p-1)v}\Phi^{\frac{k+1-q}{2}}(\varphi_{1}+(p+k-q)v_{1}\varphi).
\end{split}
\end{equation}
Taken together \eqref{32112}-\eqref{32113} implies
\begin{equation*}
\begin{split}
0\geq F^{ii}v_{ii1}=e^{-v}(F^{ii}a_{ii1}-v_{1}F^{ii}a_{ii})
=e^{(p-2)v}\Phi^{\frac{k+1-q}{2}}(\varphi_{1}+(p-q+l)v_{1}\varphi),
\end{split}
\end{equation*}
this simplifies to
$$0\geq \varphi_{1}+(p-q+l)v_{1}\varphi,$$
which yields that
\begin{equation*}
\begin{split}
\frac{|\nabla u(x)|}{u(x)}\leq v_{1}(x_{0})\leq\frac{1}{p-q+l}\max_{\mathbb{S}^{n}}\frac{|\nabla\varphi|}{\varphi}.
\end{split}
\end{equation*}
\end {proof}

\subsection{$C^{2}$ estimate}
\begin {theorem}\label{L4}
Let $0\leq l<k \leq C_{n-1}^{\mathscr{P}-1}$, $n\geq2$, $\mathscr{P}>1$, and $p>q-l$. Let $\varphi\in C^{2}(\mathbb{S}^{n})$ be a positive  function. Suppose that $u\in C^{4}(\mathbb{S}^{n})$ is a positive admissible solution of the equation \eqref{1123}. Then there exists a positive constant $C$, depending only on $n$, $k$, $l$, $p$, $q$, $\min_{\mathbb{S}^{n}}\varphi$, $\|u\|_{C^{1}}$ and $\|\varphi\|_{C^{2}}$ such that $$\max_{\mathbb{S}^{n}}|\nabla^{2}u|\leq C.$$
\end {theorem}
\begin {proof}
Consider the auxiliary function
$$Q=\log\lambda_{1}+A|\nabla u|^{2}-\log u,$$
where $\lambda_{1}$ is the maximal eigenvalue of $\{u_{ij}+u\delta_{ij}\}$. Assume that $Q$ attains its maximum at some point $x_{0}\in\mathbb{S}^{n}$. Choose a local orthonormal frame  $\{e_{1},e_{2},\cdot\cdot\cdot,e_{n}\}$ near $x_{0}$ such that
\begin{equation*}
\begin{split}
\lambda_{i}=a_{ii}, \quad \lambda_{1}=\cdot\cdot\cdot=\lambda_{m}>\lambda_{m+1}\geq\cdot\cdot\cdot\geq\lambda_{n},\quad \mbox{at }x_{0}.
\end{split}
\end{equation*}
By Lemma 5 in \cite{GCF1999}, suppose that \(\psi\) is a smooth function such that \(\lambda_1 \leq \psi\) everywhere and \(\lambda(x_0) = \psi(x_0)\). Then, at $x_0$,
$$\psi_i = a_{11i}, $$
 and
$$\psi_{ii} \geq a_{11ii} + 2 \sum_{p>m} \frac{a_{1pi}^2}{\lambda_1 - \lambda_p}.$$
In other words, we have
\begin{equation*}
\lambda_{1i} = a_{11i},
\end{equation*}
and
\begin{equation*}
\lambda_{1ii} \geq a_{11ii} + 2 \sum_{p>m} \frac{a_{1pi}^2}{\lambda_1 - \lambda_p}\geq a_{11ii}.
\end{equation*}
in the viscosity sense. Without loss of generality, the following computations are performed in the viscosity sense.
Denote
\begin{equation*}
\begin{split}
\widetilde{F}= [\frac{\sigma_{k}(\Lambda(\nabla^{2}u+uI))}{\sigma_{l}(\Lambda(\nabla^{2}u+uI))}]^{\frac{1}{k-l}},\quad
\widetilde{F}^{ij}=\frac{\partial \widetilde{F} }{\partial a_{ij}}.
\end{split}
\end{equation*}
Then the equation \eqref{1123} can be written as
\begin{equation}\label{32139}
\begin{split}
\widetilde{F}= [\frac{\sigma_{k}(\Lambda(\nabla^{2}u+uI))}{\sigma_{l}(\Lambda(\nabla^{2}u+uI))}]^{\frac{1}{k-l}}=\widetilde{f}(x,u,\nabla u),
\end{split}
\end{equation}
where
\begin{equation*}
\begin{split}
\widetilde{f}(x,u,\nabla u)=[u^{p-1}(u^{2}+|\nabla u|^{2})^{\frac{k+1-q}{2}}\varphi(x)]^{\frac{1}{k-l}}.
\end{split}
\end{equation*}

At $x_{0}$, the matrix $\{\widetilde{F}^{ij}\}$, $\{a_{ij}\}$ and $\{u_{ij}\}$ are all diagonal. All subsequent calculations are carried out at $x_{0}$. Hence
\begin{equation}\label{32130}
\begin{split}
0=Q_{i}=\frac{a_{11i}}{a_{11}}+2Au_{i}u_{ii}-\frac{u_{i}}{u},
\end{split}
\end{equation}
and
\begin{equation}\label{32127}
\begin{split}
0\geq \widetilde{F}^{ii}Q_{ii}
\geq&\frac{\widetilde{F}^{ii}a_{11ii}}{a_{11}}-\frac{\widetilde{F}^{ii}a_{11i}^{2}}{a_{11}^{2}}
+2A\widetilde{F}^{ii}u_{ii}^{2}+2A\widetilde{F}^{ii}\sum_{l}u_{l}u_{lii}-\frac{\widetilde{F}^{ii}u_{ii}}{u}
+\frac{\widetilde{F}^{ii}u_{i}^{2}}{u^{2}}.
\end{split}
\end{equation}
First, we estimate the term $\frac{\widetilde{F}^{ii}a_{11ii}}{a_{11}}$ in \eqref{32127}.
 By the Ricci identity
\begin{equation}\label{32151}
\begin{split}
a_{11ii}=a_{ii11}+a_{11}-a_{ii}\geq a_{ii11}.
\end{split}
\end{equation}
Differentiating \eqref{32139} twice at $x_{0}$, and using the concavity of $\widetilde{F}$, we obtain
\begin{equation}\label{32140}
\begin{split}
\widetilde{F}^{ii}a_{ii11}=\widetilde{f}_{11}-\widetilde{F}^{ij,rs}a_{ij1}a_{rs1}\geq \widetilde{f}_{11}.
\end{split}
\end{equation}
Then using \eqref{32126} differentiating the equation once yields
\begin{equation}\label{32135}
\begin{split}
\widetilde{F}^{ii}u_{lii}=\widetilde{f}_{x}+\widetilde{f}_{u}u_{l}+\widetilde{f}_{u_{i}}u_{il}-u_{l}\sum_{i}\widetilde{F}^{ii},
\end{split}
\end{equation}
A direct but lengthy computation then gives
\begin{equation}\label{32129}
\begin{split}
\widetilde{f}_{11} \geq -Mu_{11}^{2}-C+\widetilde{f}_{u_{i}}u_{i11},
\end{split}
\end{equation}
where the constants $M$, $N$ and $C$ depends on $n$, $k$, $l$, $p$, $q$, $\sup u$, $\sup \varphi$, $\min_{\mathbb{S}^{n}}\varphi$,  $\|u\|_{C^{1}}$ and $\|\varphi\|_{C^{2}}$.
Without loss of generality, we may assume that $u_{11}>1$. Combining with \eqref{32151} and \eqref{32140}-\eqref{32129} show that
\begin{equation}\label{32128}
\begin{split}
\frac{\widetilde{F}^{ii}a_{11ii}}{a_{11}}
\geq\frac{\widetilde{F}^{ii}a_{ii11}}{a_{11}}\geq\frac{\widetilde{f}_{11}}{a_{11}}
\geq \frac{-Mu_{11}^{2}-C}{a_{11}}+\frac{\widetilde{f}_{u_{i}}u_{i11}}{a_{11}}
\geq \frac{\widetilde{f}_{u_{i}}u_{i11}}{a_{11}}-Mu_{11}-C.
\end{split}
\end{equation}
Together \eqref{32130}, \eqref{32135}, \eqref{32128} and proposition \ref{P4}  yields
\begin{equation}\label{32137}
\begin{split}
&\frac{\widetilde{f}_{u_{i}}u_{i11}}{a_{11}}+2A\widetilde{F}^{ii}\sum_{l}u_{l}u_{lii}\\
=&\frac{\widetilde{f}_{u_{i}}u_{i}}{u}-\frac{\widetilde{f}_{u_{i}}u_{i}}{a_{11}}+2A\widetilde{f}_{x}\sum_{l}u_{l}
+2A|\nabla u|^{2}\widetilde{f}_{u}-2A|\nabla u|^{2}\sum_{i}\widetilde{F}^{ii}\\
\geq& -AC\sum_{i}\widetilde{F}^{ii}.
\end{split}
\end{equation}
Combining \eqref{32130} with the symmetry $u_{11i}=u_{i11}$ gives
\begin{equation*}
\begin{split}
\frac{a_{11i}^{2}}{a_{11}^{2}}=(\frac{u_{i}}{u}-2Au_{i}u_{ii})^{2}\leq \frac{2u_{i}^{2}}{u^{2}}+8A^{2}u_{i}^{2}u_{ii}^{2},
\end{split}
\end{equation*}
which implies
\begin{equation}\label{32136}
\begin{split}
-\frac{\widetilde{F}^{ii}a_{11i}^{2}}{a_{11}^{2}}+\frac{\widetilde{F}^{ii}u_{i}^{2}}{u^{2}}\geq-\frac{\widetilde{F}^{ii}u_{i}^{2}}
{u^{2}}-8A^{2}\widetilde{F}^{ii}u_{i}^{2}u_{ii}^{2}\geq -C\sum_{i}\widetilde{F}^{ii}-A^{2}C\widetilde{F}^{ii}u_{ii}^{2}.
\end{split}
\end{equation}
Using \eqref{32125} one can get
\begin{equation}\label{32138}
\begin{split}
-\frac{\widetilde{F}^{ii}u_{ii}}{u}=-\frac{\widetilde{F}^{ii}a_{ii}}{u}+\sum_{i}\widetilde{F}^{ii}\geq -C.
\end{split}
\end{equation}
Then substituting \eqref{32128}, \eqref{32137}, \eqref{32136} and \eqref{32138} into \eqref{32127}, we arrive at
\begin{equation*}
\begin{split}
0\geq(A-A^{2}C)Cu_{11}^{2}+A(C_{1}u_{11}^{2}-C_{2})C-Mu_{11}-C,\\
\end{split}
\end{equation*}
choosing $0<A<\frac{1}{C}$ yields
\begin{equation*}
\begin{split}
0\geq AC_{1}u_{11}^{2}-Mu_{11}-AC,\\
\end{split}
\end{equation*}
it follows that $u_{11}\leq C$, which completes the proof.
\end {proof}

Then we cosider the $C^2$ estimates  of the equation \eqref{32150}  for $0\leq l<k\leq N$.

\begin {theorem}\label{L10}
Let $0\leq l<k\leq N$, $n\geq2$, $\mathscr{P}>1$, $p>q-l$, and $q=k+1$. Let $\varphi\in C^{2}(\mathbb{S}^{n})$ be a positive function. Suppose that $u\in C^{4}(\mathbb{S}^{n})$ is a positive admissible solution of the equation \eqref{32150}. Then there exists a positive constant $C$ depending only on $n$, $k$, $l$, $p$, $\min_{\mathbb{S}^{n}}\varphi$, $\|u\|_{C^{1}}$ and $\|\varphi\|_{C^{2}}$ such that
\begin{equation*}
\begin{split}
|\nabla^{2}u(x)|\leq C,\quad  x\in \mathbb{S}^{n}.
\end{split}
\end{equation*}
\end {theorem}
\begin {proof}
Consider the auxiliary function
\begin{equation*}
\begin{split}
R(x)=\lambda_{1},
\end{split}
\end{equation*}
where $\lambda_{1}$ is the maximum eigenvalue of $\{u_{ij}+u\delta_{ij}\}$.
Assume that $R(x)$ attains its maximum at $x_{0}\in\mathbb{S}^{n}$. Choose a local orthonormal frame  $\{e_{1},e_{2},\cdot\cdot\cdot,e_{n}\}$ near $x_{0}$ such that
\begin{equation*}
\begin{split}
\lambda_1=u_{11}+u,\quad \{a_{ij}\}_{1\leq i,j\leq n}\,\,\mbox{ is diagonal}.
\end{split}
\end{equation*}
Then at $x_{0}$ we have
$$0=\widetilde{R}_{i}=a_{11i}=u_{11i}+u_{i},$$
 and
$$0\geq\widetilde{R}_{ii}=a_{11ii},$$ in the viscosity sense.
It follows from \eqref{32139} that for $q=k+1$, $\widetilde{f}=[u^{p-1}\varphi(x)]^{\frac{1}{k-l}}$. By the Ricci identity $a_{11ii}=a_{ii11}+a_{11}-a_{ii}$ together with \eqref{32151} yields
\begin{equation*}
\begin{split}
0\geq\widetilde{F}^{ii}\widetilde{R}_{ii}\geq\widetilde{F}^{ii}a_{ii11}\geq \widetilde{f}_{11}=\nabla_{11}(u^{p-1}\varphi)^{\frac{1}{k-l}}\geq\frac{p-1}{k-l}u^{\frac{p-1}{k-l}-1}\varphi^{\frac{1}{k-l}}u_{11}-C,
\end{split}
\end{equation*}
which implies $u_{11}\leq C$.
\end {proof}

\subsection{\textbf{Existence and uniqueness of the positive solutions}}
\quad
\begin {lemma}\label{32148}
Let $0\leq l<k\leq N$, $n\geq2$, $\mathscr{P}>1$ and $p>q-l$. Let $\varphi\in C^{0}(\mathbb{S}^{n})$ be a positive function. Then the positive admissible solution of the equation \eqref{1123} is unique.
\end {lemma}
\begin {proof}
Suppose $u_{1}$ and $u_{2}$ are two positive admissible solutions of equation \eqref{1123}. Consider the function $G=\frac{u_{1}}{u_{2}}$ and assume $G$ attains its maximum at a point $x_{0}\in \mathbb{S}^{n}$. Then at $x_{0}$
\begin{equation*}
\begin{split}
0=\nabla \log G=\frac{\nabla u_{1}}{u_{1}}-\frac{\nabla u_{2}}{u_{2}},
\end{split}
\end{equation*}
and
\begin{equation*}
\begin{split}
0&\geq\nabla^{2}\log G=\frac{\nabla^{2} u_{1}}{u_{1}}-\frac{\nabla^{2} u_{2}}{u_{2}},
\end{split}
\end{equation*}
which implies  $u_{1}^{-1}(\nabla^{2}u_{1}+u_{1}I)\leq u_{2}^{-1}(\nabla^{2}u_{2}+u_{2}I)$. By the monotonicity of $\sigma_k/\sigma_l$ on the admissible cone, we obtain
\begin{equation*}
\begin{split}
\frac{\sigma_{k}(\Lambda(u_{1}^{-1}(\nabla^{2}u_{1}+u_{1}I)))}{\sigma_{l}(\Lambda(u_{1}^{-1}(\nabla^{2}u_{1}+u_{1}I)))}
\leq
\frac{\sigma_{k}(\Lambda(u_{2}^{-1}(\nabla^{2}u_{2}+u_{2}I)))}{\sigma_{l}(\Lambda(u_{2}^{-1}(\nabla^{2}u_{2}+u_{2}I)))}.
\end{split}
\end{equation*}
Therefore
\begin{equation*}
\begin{split}
1=\frac{u_{1}^{1-p}(u_{1}^{2}+|\nabla u_{1}|^{2})^{\frac{q-1-k}{2}}\frac{\sigma_{k}(\Lambda(\nabla^{2}u_{1}+u_{1}I))}{\sigma_{l}(\Lambda(\nabla^{2}u_{1}+u_{1}I))}}
{u_{2}^{1-p}(u_{2}^{2}+|\nabla u_{2}|^{2})^{\frac{q-1-k}{2}}\frac{\sigma_{k}(\Lambda(\nabla^{2}u_{2}+u_{2}I))}{\sigma_{l}(\Lambda(\nabla^{2}u_{2}+u_{2}I))}}
\leq G^{q-l-p}(x_{0}).
\end{split}
\end{equation*}
The condition $q-l-p<0$ yields $G(x_{0})=\max_{\mathbb{S}^{n}}G \leq 1$. A similar argument shows $\min _{\mathbb{S}^{n}}G\geq1$, hence, $u_{1}\equiv u_{2}$.
\end {proof}

\begin {lemma}\label{32149}
Let $0\leq l<k\leq N$, $n\geq2$, $\mathscr{P}>1$, and $p>q-l$.  Then, at every positive admissible solution $u$ of equation \eqref{1123}, the linearized operator $L_u$ is invertible.
\end {lemma}
\begin {proof}
Suppose that $u$ is a positive admissible solution of equation \eqref{1123}. Denote $a_{ij}=u_{ij}+u\delta_{ij}$ for $1\leq i,j\leq n$.Then the linearized operator of equation \eqref{1123} is
\begin{equation*}
\begin{split}
L_{u}(v)=&\widetilde{F}^{ij}(v_{ij}+v\delta_{ij})-\frac{p-1}{k-l}u^{\frac{p-1}{k-l}-1}(u^{2}+|\nabla u|^{2})^{\frac{k+1-q}{2(k-l)}}\varphi^{\frac{1}{k-l}}v\\
&-\frac{k+1-q}{2(k-l)}u^{\frac{p-1}{k-l}}(u^{2}+|\nabla u|^{2})^{\frac{k+1-q}{2(k-l)}-1}\varphi^{\frac{1}{k-l}}(2uv+2\sum_{l}u_{l}v_{l}).
\end{split}
\end{equation*}
Setting $v=u\omega$ yields
\begin{equation*}
\begin{split}
L_{u}(v)=&\widetilde{F}^{ij}[(u_{ij}+u\delta_{ij})\omega+2u_{i}\omega_{j}+u\omega_{ij}]\\
&-\frac{p-1}{k-l}u^{\frac{p-1}{k-l}}(u^{2}+|\nabla u|^{2})^{\frac{k+1-q}{2(k-l)}}\varphi^{\frac{1}{k-l}}\omega\\
&-\frac{k+1-q}{k-l}u^{\frac{p-1}{k-l}}(u^{2}+|\nabla u|^{2})^{\frac{k+1-q}{2(k-l)}-1}\varphi^{\frac{1}{k-l}}\omega\\
&-\frac{k+1-q}{k-l}u^{\frac{p-1}{k-l}+1}(u^{2}+|\nabla u|^{2})^{\frac{k+1-q}{2(k-l)}-1}\varphi^{\frac{1}{k-l}}\sum_{l}u_{l}v_{l}.\\
\end{split}
\end{equation*}

We now show that $KerL_{u}=\{0\}$. It suffices to show that if $L_{u}(v)=0$, then $\omega=0$.
Suppose $\omega$ attains its maximum at a point $x_{0}$. Then $\nabla\omega(x_{0})=0$ and the Hessian matrix satisfies $\nabla^{2}\omega(x_{0})\leq0$. Consequently, at $x_{0}$
\begin{equation*}
\begin{split}
0=L_{u}(v)=(1-\frac{p-1}{k-l}-\frac{k+1-q}{k-l})\widetilde{F}\omega
+u\widetilde{F}^{ij}\omega_{ij}
\leq\frac{q-l-p}{k-l}\widetilde{F}\omega.
\end{split}
\end{equation*}
Since $p>q-l$, we get $\omega(x_{0})\leq0$. Applying the same argument at a minimum point $x_{1}$ of $\omega$, we obtain $\min\omega(x_{1})\geq0$. Hence $\omega\equiv 0$ and therefore $KerL_{u}=\{0\}$.Since $L_{u}$ is a uniformly elliptic linear operator of index zero, it follows that $L_{u}$ is invertible.
\end {proof}
\begin {proof}[\textbf{{Proof of Theorem \ref{L6}}.}]
As in \cite{HMS-04}, we consider the following equation
\begin{equation}\label{32147}
\begin{split}
\frac{\sigma_{k}(\Lambda(\nabla^{2}u+uI))}{\sigma_{l}(\Lambda(\nabla^{2}u+uI))}=u^{p-1}(u^{2}+|\nabla u|^{2})^{\frac{k+1-q}{2}}\varphi_{t},
\end{split}
\end{equation}
where
\begin{equation*}
\begin{split}
\varphi_{t}=\left((1-t)\left[\frac{C_{N}^{k}}{C_{N}^{l}}\mathscr{P}^{k-l}\right]^{-\frac{1}{k-l+p-1}}+t\varphi^{-\frac{1}{k-l+p-1}}\right)^{-(k-l+p-1)}.
\end{split}
\end{equation*}

We define the set $S$ as follows:
$$S = \left\{ t \in [0, 1] \ |\ \text{equation}\, \eqref{32147}\text{ has a positive strictly spherically convex solution } u_t \right\}.$$

Clearly, $S$ is nonempty, since $u_{0}\equiv1$ is a positive strictly spherically convex solution of equation \eqref{32147} for $t=0$. It remains to show that $S$ is both open and closed.

To prove that $S$ is open, let $t_{0}\in S$, and let $u_{t_{0}}$ be the corresponding positive strictly spherically convex solution. By Lemma \ref{32149}, we have $KerL_{u}=\{0\}$. Therefore, by the implicit function theorem, there exists a neighborhood $\mathscr{N}$ of $t_{0}$ such that, for every $t\in\mathscr{N}$, equation \eqref{32147} admits a positive strictly spherically convex solution $u_{t}$. Hence $\mathscr{N}\in S$, and thus $S$ is open.

Next, we show that $S$ is closed. Let $\{t_i\}_{i=1}^\infty\subset S$ be a sequence such that $t_i\to t_0$, and let $u_{t_i}$ be a positive strictly spherically convex solution of equation \eqref{32147} corresponding to $ t = t_i $. By Theorems \ref{L5}, \ref{L9}, and \ref{L4}, together with the Evans--Krylov theorem and Schauder estimates, we obtain higher order estimates. Consequently, after passing to a subsequence if necessary, $u_{t_i}$ converges to a function $u$, where $u$ is a positive admissible solution of equation \eqref{32147} for $t=t_0$. Clearly, $(u_{ij}+u\delta_{ij})$ is positive semidefinite. Moreover, it is easy to verify that $p$, $q$, and $\varphi_{t_0}$ satisfy the assumptions of Theorem \ref{Full-rank-1}, since $p$, $q$, and $\varphi$ satisfy the assumptions of Theorem \ref{L6}. Therefore, by the full rank theorem (Theorem \ref{Full-rank-1}), $(u_{ij}+u\delta_{ij})$ is in fact positive definite. This proves that $t_0\in S$, and hence $S$ is closed.

Therefore, $S = [0,1]$. In particular, equation \eqref{32147} with $t = 1 $, namely equation \eqref{1123}, admits a positive strictly spherically convex solution. The uniqueness follows directly from Lemma \ref{32148}.
\end {proof}
\begin {proof}[\textbf{{Proof of Theorem \ref{L8}} (\romannumeral1).}]
Based on Theorem \ref{L5}, \ref{L9} and \ref{L10}, higher order estimates can be established by Evans-Krylov and Schauder theory. Then  the proof of existence and uniqueness result is quite similar to that given for Theorem \ref{L6}  and so is omitted.
\end {proof}

%%%%%%%%%%%%%%%%%%%%%%%%%%%%%%%%%%%%%%%%%%%%%%%%%%%%%%%%%%%%%%%%%%%%%%%%%%%%%%%%%%%%%%%%%%%%%%%%%%%%%%%%%%%%%%%%%%%%%%%%%%%%%%%%%%%%%%%%%%%%%%%%%%%%%%%%%%%%%%%%%%%%
\section{the homogeneous case $p=q-l$}
%%%%%%%%%%%%%%%%%%%%%%%%%%%%%%%%%%%%%%%%%%%%%%%%%%%%%%%%%%%%%%%%%%%%%%%%%%%%%%%%%%%%%%%%%%%%%%%%%%%%%%%%%%%%%%%%%%%%%%%%%%%%%%%%%%%%%%%%%%%%%%%%%%%%%%%%%%%%%%%%%%%%
In this section, we consider equation \eqref{1123} in the homogeneous case $p=q-l$. By suitably modifying the method in \cite{GL-99}, we are led to consider the following equation:
\begin{equation}\label{32142}
\begin{split}
\frac{\sigma_{k}(\Lambda(\nabla^{2}u+uI))}{\sigma_{l}(\Lambda(\nabla^{2}u+uI))}=u^{p-1+\varepsilon}(u^{2}+|\nabla u|^{2})^{\frac{k+1-q}{2}}\varphi(x),\quad \forall x\in \mathbb{S}^{n},
\end{split}
\end{equation}
for any small $\varepsilon>0$.
\subsection{$C^{0}$ estimate}
\begin {theorem}\label{L2}
Let $0\leq l<k\leq N$, $n\geq2$, $\mathscr{P}>1$, and $p=q-l>1$. Let $\varphi\in C^{0}(S^{n})$ be a positive function. Suppose that $u\in C^{2}(\mathbb{S}^{n})$ is a positive admissible solution of the equation \eqref{32142}. Then
$$\frac{C_{N}^{k}}{C_{N}^{l}}\frac{\mathscr{P}^{k-l}}{\max_{\mathbb{S}^{n}}\varphi}\leq u^{\varepsilon}(x)
\leq\frac{C_{N}^{k}}{C_{N}^{l}}\frac{\mathscr{P}^{k-l}}{\min_{\mathbb{S}^{n}}\varphi},\quad \forall x\in \mathbb{S}^{n}.$$
\end {theorem}
\begin {proof}
The proof is similar to that of Theorem \ref{L5}.
\end {proof}
\subsection{$C^{1}$ estimate}
\begin {theorem}\label{L3}
Let $0\leq l<k\leq N$, $n\geq 2$, $\mathscr{P}>1$, and $p=q-l>1$. Let $\varphi\in C^{1}(S^{n})$ be a positive function. Suppose that $u\in C^{3}(S^{n})$ is a positive admissible solution of equation \eqref{32142}. Then there exists a positive constant $C$, depending only on $n$, $k$, $l$, $\mathscr{P}$, $\min_{S^{n}}\varphi$, and $\|\varphi\|_{C^{1}(S^{n})}$, but independent of $\varepsilon$, such that
\begin{equation*}
\begin{split}
\frac{|\nabla u(x)|}{u(x)}\leq C,\quad \forall x\in \mathbb{S}^{n}.
\end{split}
\end{equation*}
\end {theorem}
\begin {proof}
For the positive admissible solution $u$, let $v=\log u$. Consider the test function
\begin{equation*}
\begin{split}
P=|\nabla v|^{2}.
\end{split}
\end{equation*}
Assume $P$ attains its maximum at point $x_{0}$. If $P(x_0)=0$, then there is nothing to prove. Hence we may assume that $P(x_0)>0$, and choose a local orthonormal frame field such that, at $x_0$,
\begin{equation*}
\begin{split}
v_{1}=|\nabla v|>0,\quad \{v_{ij}\}_{2\leq i,j\leq n}\quad \mbox{is diagonal}.
\end{split}
\end{equation*}
In what follows, all calculations are carried out at $x_{0}$. Hence
\begin{equation*}
\begin{split}
0=P_{i}=2\sum_{k=1}^{n}v_{k}v_{ki}=2v_{1}v_{1i},
\end{split}
\end{equation*}
and
\begin{equation*}
\begin{split}
0\geq P_{ii}=2\sum_{k=1}^{n}(v_{k}v_{kii}+v_{ki}^{2}).
\end{split}
\end{equation*}
Given $v_{1}>0$, it follows that $v_{1i}=0$ for $i=1,2,\cdot\cdot\cdot,n.$ Hence $\{v_{ij}\}_{1\leq i,j\leq n}$ is diagonal, and the same is true for $\{a_{ij}\}_{1\leq i,j\leq n}$ and $\{F^{ij}\}_{1\leq i,j\leq n}$. Thus
\begin{equation*}\label{32143}
\begin{split}
0\geq F^{ii}P_{ii}=2F^{ii}v_{ii}^{2}+2v_{1}F^{ii}v_{1ii}\geq2v_{1}F^{ii}v_{1ii},
\end{split}
\end{equation*}
applying the Ricci identity $v_{1ii}=v_{ii1}+v_{1}-v_{i}\delta_{1i}$ we obtain
\begin{equation}\label{32153}
\begin{split}
0\geq F^{ii}v_{ii1}+v_{1}\sum_{i=2}^{n}F^{ii}.
\end{split}
\end{equation}
Using proposition \ref{P4} and proposition \ref{p2}, we obtain
\begin{equation}\label{32154}
\begin{split}
\sum_{i=2}^{n}F^{ii}&=\sum_{i=2}^{n}\sum_{\{\mathbf{I}\in\mathfrak{I}|i\in I\}}F^{\mathbf{I}\mathbf{I}}\frac{\partial W_{\mathbf{I}\mathbf{I}}}{\partial a_{ii}}\\
&=(\mathscr{P}-1)\sum_{\mathbf{I}}F^{\mathbf{I}\mathbf{I}}+\sum_{1\notin I}F^{\mathbf{I}\mathbf{I}}\geq\frac{\mathscr{P}-1}{\mathscr{P}}\sum_{i=1}^{n}F^{ii}\\
&\geq C(n, k ,l,\mathscr{P})[e^{(p+\varepsilon+k-q)v}(1+|\nabla v|^{2})^{\frac{k+1-q}{2}})\varphi]^{\frac{k-l-1}{k-l}},
\end{split}
\end{equation}
where $C(n, k ,l,\mathscr{P})$ is a positive constant depending only on $n, k, l$ and $\mathscr{P}$. Arguing as in the derivation of \eqref{32152}-\eqref{32113} one can obtain
\begin{equation}\label{32155}
\begin{split}
F^{ii}v_{ii1}=e^{(p-1+\varepsilon+k-q)v}(1+|\nabla v|^{2})^{\frac{k+1-q}{2}}(\varphi_{1}+\varepsilon v_{1}\varphi).
\end{split}
\end{equation}
Substituting \eqref{32154} and \eqref{32155} into \eqref{32153} yields
\begin{equation*}\label{32156}
\begin{split}
0\geq e^{(p-1+\varepsilon+k-q)v}(1+|\nabla v|^{2})^{\frac{k+1-q}{2}}(\varphi_{1}+\varepsilon v_{1}\varphi)+Cv_{1}[e^{(p+\varepsilon+k-q)v}(1+|\nabla v|^{2})^{\frac{k+1-q}{2}})\varphi]^{\frac{k-l-1}{k-l}},
\end{split}
\end{equation*}
which implies
\begin{equation*}
\begin{split}
0\geq (1+|\nabla v|^{2})^{\frac{k+1-q}{2(k-l)}}\varphi_{1}+Cv_{1}e^{-\frac{\varepsilon v}{k-l}}\varphi^{\frac{k-l-1}{k-l}}.
\end{split}
\end{equation*}
By Theorem \ref{L2}, $|e^{\varepsilon v}|=|u^\varepsilon|$ is uniformly bounded independently of $\varepsilon$. Hence there exists a positive constant $C_{1}$ depending on $n, k, l, \mathscr{P}, \min_{\mathbb{S}^{n}} \varphi$, and $\|\varphi\|_{C^{1}}$, but independent of $\varepsilon$, such that
\begin{equation*}
\begin{split}
0\geq C_{1}v_{1}-C(1+|\nabla v|^{2})^{\frac{k+1-q}{2(k-l)}},
\end{split}
\end{equation*}
since $p=q-l>1$, we obtain
\begin{equation*}
\begin{split}
|\nabla v(x_{0})| \leq C.
\end{split}
\end{equation*}
\end {proof}
\subsection{$C^{2}$ estimate}\,\\
\begin {theorem}\label{32159}
Let $0\leq l<k\leq C_{n-1}^{\mathscr{P}-1}$, $n\geq2$, $\mathscr{P}>1$, and $p=q-l>1$. Let $\varphi\in C^{2}(S^{n})$ be a positive function. Suppose that $u\in C^{4}(\mathbb{S}^{n})$ is a positive admissible solution of the equation \eqref{32142}. Define $$\widetilde{u}=\frac{u}{\min_{\mathbb{S}^{n}}u}.$$
Then there exists a positive constant $C''$, depending only on $n, k, l, \mathscr{P}, \min_{\mathbb{S}^{n}} \varphi$ and $\parallel\varphi\parallel_{C^{2}}$ such that
\begin{equation*}
\begin{split}
|\nabla^{2}\widetilde{u}(x)|\leq C'',\quad  \forall x\in \mathbb{S}^{n}.
\end{split}
\end{equation*}
\end {theorem}

\begin {proof}
We see that $\widetilde{u}$ satisfies
\begin{equation}\label{32144}
\begin{split}
\frac{\sigma_{k}(\Lambda(\nabla^{2}\widetilde{u}+\widetilde{u}I))}{\sigma_{l}(\Lambda(\nabla^{2}\widetilde{u}+\widetilde{u}I))}=
\widetilde{u}^{p-1+\varepsilon}(\widetilde{u}^{2}+|\nabla \widetilde{u}|^{2})^{\frac{k+1-q}{2}}(\min_{\mathbb{S}^{n}}u)^{\varepsilon}\varphi(x),\quad x\in \mathbb{S}^{n}.
\end{split}
\end{equation}
According to Theorem \ref{L2}
\begin{equation*}
\begin{split}
\frac{C_{N}^{k}}{C_{N}^{l}}\frac{\mathscr{P}^{k-l}}{\max\varphi}\leq (\min _{\mathbb{S}^{n}}u)^{\varepsilon}
\leq\frac{C_{N}^{k}}{C_{N}^{l}}\frac{\mathscr{P}^{k-l}}{\min\varphi}.
\end{split}
\end{equation*}
Moreover, by Theorem \ref{L3}, there exist positive constants $C$ and $C^{\prime}$ depending on $n$, $k$, $l$, $\mathscr{P}$, $\min_{\mathbb{S}^{n}} \varphi$, and $|\varphi|_{C^{1}}$, but independent of $\varepsilon$ such that
\begin{equation}\label{32145}
\begin{split}
1\leq\widetilde{u}\leq\frac{\max_{\mathbb{S}^{n}}u(x)}{\min_{\mathbb{S}^{n}}u(x)}\leq C,
\end{split}
\end{equation}
and
\begin{equation}\label{32146}
\begin{split}
|\nabla \widetilde{u}(x)|=\frac{u(x)}{\min_{\mathbb{S}^{n}}u(x)}\frac{|\nabla u(x)|}{u(x)}\leq \frac{\max_{\mathbb{S}^{n}}u(x)}{\min_{\mathbb{S}^{n}}u(x)}\frac{|\nabla u(x)|}{u(x)}\leq C^{\prime}.
\end{split}
\end{equation}
Consider the equation \eqref{32144} together with \eqref{32145}-\eqref{32146},  Theorem \ref{L4} yields $$|\nabla^{2}\widetilde{u}(x)|\leq C'',$$
where $C''$ depends on $n$, $k$, $l$, $\mathscr{P}$, $\min_{\mathbb{S}^{n}} \varphi$, and $\parallel\varphi\parallel_{C^{2}}$, but independent of $\varepsilon$.
\end {proof}
Together with Theorem \ref{L10}, this yields the following theorem.
\begin {theorem}\label{32160}
Let $0\leq l<k\leq N$, $n\geq2$, $\mathscr{P}>1$, $p=q-l>1$, and $q=k+1$. Let $\varphi\in C^{2}(S^{n})$ be a positive function. Suppose that $u\in C^{4}(\mathbb{S}^{n})$ is a positive admissible solution of the equation \eqref{32142}. Define $$\widetilde{u}=\frac{u}{\min_{\mathbb{S}^{n}}u}.$$
Then there exists a positive constant $C''$, depending only on $n, k, l, \mathscr{P}, \min_{\mathbb{S}^{n}} \varphi$ and $\|\varphi\|_{C^{2}}$ such that
\begin{equation*}
\begin{split}
|\nabla^{2}\widetilde{u}(x)|\leq C'',\quad  \forall x\in \mathbb{S}^{n}.
\end{split}
\end{equation*}
\end {theorem}

\subsection{\textbf{Existence and uniqueness of the positive solutions}}
\begin {lemma}\label{iejro-1}
Let $0\leq l<k\leq N$, $n\geq2$, $\mathscr{P}>1$, and $p=q-l$. Let $\varphi\in C^{0}(S^{n})$ be a positive function. Suppose that $u_{i}$, $i=1,2$, are positive admissible solutions of the equation
\begin{equation*}
\begin{split}
\frac{\sigma_{k}(\Lambda(\nabla^{2}u_{i}+u_{i}I))}{\sigma_{l}(\Lambda(\nabla^{2}u_{i}+u_{i}I))}=u_{i}^{p-1}(u_{i}^{2}+|\nabla u_{i}|^{2})^{\frac{k+1-q}{2}}\gamma_{i}\varphi(x),
\end{split}
\end{equation*}
where $\gamma_{i}$, $i=1,2$, are constants. Then $\gamma_{1}=\gamma_{2}$.
\end {lemma}
\begin {proof}
Suppose that $G=\frac{u_{1}}{u_{2}}$ attains its maximum at $x_{0}$. Then at $x_{0}$
\begin{equation*}
\begin{split}
0=\nabla \log G=\frac{\nabla u_{1}}{u_{1}}-\frac{\nabla u_{2}}{u_{2}},
\end{split}
\end{equation*}
and
\begin{equation*}
\begin{split}
0&\geq\nabla^{2}\log G\\
&=\frac{\nabla^{2} u_{1}}{u_{1}}-\frac{\nabla u_{1}\otimes \nabla u_{1}}{u_{1}^{2}}-\frac{\nabla^{2} u_{2}}{u_{2}}+\frac{\nabla u_{2}\otimes \nabla u_{2}}{u_{2}^{2}}\\
&=\frac{\nabla^{2} u_{1}}{u_{1}}-\frac{\nabla^{2} u_{2}}{u_{2}},
\end{split}
\end{equation*}
which implies  $u_{1}^{-1}(\nabla^{2}u_{1}+u_{1}I)\leq u_{2}^{-1}(\nabla^{2}u_{2}+u_{2}I)$. By direct calculations we obtain
\begin{equation*}
\begin{split}
\frac{\sigma_{k}(\Lambda(u_{1}^{-1}(\nabla^{2}u_{1}+u_{1}I)))}{\sigma_{l}(\Lambda(u_{1}^{-1}(\nabla^{2}u_{1}+u_{1}I)))}\leq \frac{\sigma_{k}(\Lambda(u_{2}^{-1}(\nabla^{2}u_{2}+u_{2}I)))}{\sigma_{l}(\Lambda(u_{2}^{-1}(\nabla^{2}u_{2}+u_{2}I)))}.
\end{split}
\end{equation*}
Therefore
\begin{equation*}
\begin{split}
\frac{\gamma_{1}}{\gamma_{2}}&=\frac{\gamma_{1}\varphi(x_{0})}{\gamma_{2}\varphi(x_{0})}=\frac{u_{1}^{1-p}(u_{1}^{2}+|\nabla u_{1}|^{2})^{\frac{q-1-k}{2}}\frac{\sigma_{k}(\Lambda(\nabla^{2}u_{1}+u_{1}I))}{\sigma_{l}(\Lambda(\nabla^{2}u_{1}+u_{1}I))}}{u_{2}^{1-p}(u_{2}^{2}+|\nabla u_{2}|^{2})^{\frac{q-1-k}{2}}\frac{\sigma_{k}(\Lambda(\nabla^{2}u_{2}+u_{2}I))}{\sigma_{l}(\Lambda(\nabla^{2}u_{2}+u_{2}I))}}\leq 1.
\end{split}
\end{equation*}
 The treatment for $\frac{\gamma_{1}}{\gamma_{2}}\geq1$ is similar. Therefore $\gamma_{1}\equiv \gamma_{2}$.
\end {proof}

\begin {lemma}\label{iuerte}
Let $0\leq l<k\leq N$, $n\geq2$, $\mathscr{P}>1$, and $p=q-l$. Let $\varphi\in C^{0}(S^{n})$ be a positive function. Suppose that $u$ and $\widehat{u}$ are positive admissible solutions of the equation
\begin{equation*}
\begin{split}
\frac{\sigma_{k}(\Lambda(\nabla^{2}u+uI))}{\sigma_{l}(\Lambda(\nabla^{2}u+uI))}=u^{p-1}(u^{2}+|\nabla u|^{2})^{\frac{k+1-q}{2}}\gamma\varphi(x),
\end{split}
\end{equation*}
with the same constant $\gamma$. Then $u=\widehat{u}$.
\end {lemma}
\begin {proof}
Let
$$M(u)=\frac{\sigma_{k}(\Lambda(\nabla^{2}u+uI))/\sigma_{l}(\Lambda(\nabla^{2}u+uI))}{u^{p-1}(u^{2}+|\nabla u|^{2})^{\frac{k+1-q}{2}}}.$$
Then
$$M(u)-M(\widehat{u})=\gamma\varphi-\gamma\varphi=0.$$

Since $M$ is invariant under scaling, we may assume $u\leq\widehat{u}$ and $u(x_{0})=\widehat{u}(x_{0})$ for some point $x_{0}\in\mathbb{S}^{n}$. Denote
\begin{equation*}
\begin{split}
u_{t}=tu+(1-t)\widehat{u},\quad \forall\,0\leq t\leq1,
\end{split}
\end{equation*}
\begin{equation*}
\begin{split}
a_{ij}(t)=t(\nabla u^{2}+uI)+(1-t)(\nabla\widehat{u}^{2}+\widehat{u}I),
\end{split}
\end{equation*}
then
\begin{equation*}
\begin{split}
0&=M(u)-M(\widetilde{u})=\int^{1}_{0}\frac{d}{dt}M(u_{t})dt\\
&=\sum_{i,j=1}^{n}b_{ij}(x)(u-\widehat{u})_{ij}+\sum_{i=1}^{n}c_{i}(x)(u-\widehat{u})_{i}+d(x)(u-\widehat{u}),
\end{split}
\end{equation*}
where
\begin{equation*}
\begin{split}
b_{ij}=\int^{1}_{0}u_{t}^{1-p}(u_{t}+|\nabla u_{t}|^{2})^{-\frac{k+1-q}{2}}\frac{\partial[\sigma_{k}(\Lambda_{t})/\sigma_{l}(\Lambda_{t})]}{\partial[a_{ij}(t)]}dt,
\end{split}
\end{equation*}
\begin{equation*}
\begin{split}
c_{i}=-(k+1-q)\int^{1}_{0}u_{t}^{1-p}(u_{t}+|\nabla u_{t}|^{2})^{-\frac{-k-3+q}{2}}\sum_{i=1}^{n}(tu_{i}+(1-t)\widetilde{u}_{i})\frac{\sigma_{k}(\Lambda_{t})}{\sigma_{l}(\Lambda_{t})}dt,
\end{split}
\end{equation*}
\begin{equation*}
\begin{split}
d=\int^{1}_{0}u_{t}^{-p}(u_{t}+|\nabla u_{t}|^{2})^{-\frac{k+1-q}{2}}\bigg(u_{t}&\sum_{i=1}^{n}\frac{\partial[\sigma_{k}(\Lambda_{t})/\sigma_{l}
(\Lambda_{t})]}{\partial[a_{ii}(t)]}\\
&-(p-1)\frac{\sigma_{k}(\Lambda_{t})}{\sigma_{l}(\Lambda_{t})}
-\frac{(k+1-p)u_{t}^{2}}{u_{t}^{2}+|\nabla u_{t}|^{2}}\frac{\sigma_{k}(\Lambda_{t})}{\sigma_{l}(\Lambda_{t})}\bigg)dt.
\end{split}
\end{equation*}

It follows from the maximum principle that $u-\widehat{u}\equiv 0$ on $\mathbb{S}^{n}$. Hence, the solution is unique.
\end {proof}
\begin {proof}[\textbf{{Proof of Theorem \ref{L7}}.}]
The uniqueness results can be easily obtained by Lemma \ref{iejro-1} and \ref{iuerte}. In the following, we prove the existence part by approximation.

For any small $\varepsilon$, the equation \eqref{32142} has a unique positive admissible solution $u_{\varepsilon}$. Denote $\widetilde{u}_{\varepsilon}=\frac{u_{\varepsilon}}{\min_{\mathbb{S}^{n}}u_{\varepsilon}}$. Then $\widetilde{u}_{\varepsilon}$ satisfies
\begin{equation*}
\begin{split}
\frac{\sigma_{k}(\nabla^{2}\widetilde{u}_{\varepsilon}+\widetilde{u}_{\varepsilon}I)}
{\sigma_{l}(\nabla^{2}\widetilde{u}_{\varepsilon}+\widetilde{u}_{\varepsilon}I)}
=\widetilde{u}_{\varepsilon}^{p-1+\varepsilon}(\widetilde{u}_{\varepsilon}^{2}+|\nabla \widetilde{u}_{\varepsilon}|^{2})
^{\frac{k+1-q}{2}}(\min_{\mathbb{S}^{n}}\widetilde{u}_{\varepsilon})^{\varepsilon}\varphi(x).
\end{split}
\end{equation*}
Letting $\varepsilon \rightarrow 0^{+}$, we have $|\nabla (\widetilde{u}_{\varepsilon})^{\varepsilon}|=\varepsilon(\widetilde{u}_{\varepsilon})^{\varepsilon-1}|\nabla (\widetilde{u}_{\varepsilon})|\rightarrow 0$ by \eqref{32145}-\eqref{32146}. Then $(\min_{\mathbb{S}^{n}}\widetilde{u}_{\varepsilon})^{\varepsilon}$ converges to a positive constant $\gamma$. Thus $\widetilde{u}_{\varepsilon}$ converges to a positive admissible solution $u$ of equation $$\frac{\sigma_{k}(\Lambda)}{\sigma_{l}(\Lambda)}=u^{p-1}(u^{2}+|\nabla u|^{2})^{\frac{k+1-q}{2}}\gamma\varphi(x).$$
Moreover, theorem \ref{Full-rank-1} ensures the strict spherical convexity of $u$.
\end {proof}
\begin {proof}[\textbf{{Proof of Theorem \ref{L8}} (\romannumeral2).}]
The proof is analogous to that in Theorem \ref{L7} and so is omitted. It is important to note that Theorem \ref{32160} is applied to establish the convergence of $\widetilde{u}_{\varepsilon}$ to a strictly spherically convex function $u$.
\end {proof}

%%%%%%%%%%%%%%%%%%%%%%%%%%%%%%%%%%%%%%%%%%%%%%%%%%%%%%%%%%%%%%%%%%%%%%%%%%%%%%%%%%%%%%%%%%%%%%%%%%%%%%%%%%%%%%%%%%%%%%%%%%%%%%%%%%%%%%%%%%%%%%%%%%%%%%%%%%%%%%%%%%%%


\begin{thebibliography}{50}
\setlength{\itemsep}{-0pt}
\small
\bibitem{Ale-38}
A. Aleksandrov,
 On the theory of mixed volumes. III. Extensions of two theorems of Minkowski on convex polyhedra to arbitrary convex bodies,
 Mat. Sb. (N.S.)
 3,
 27-46,
 (1938)%少了number

\bibitem{Ale-39}
A. Aleksandrov,
 On the surface area measure of convex bodies,
 Mat. Sb. (N.S.)
 6,
 167-174,
 (1939)%少了number

\bibitem{BP}
B.Bian, P.Guan,
 A microscopic convexity principle for nonlinear partial differential equations,
 Invent. Math.,
 177,
 203-335,
 (2009)%少了number

\bibitem{Ber-69}
C. Berg,
 Corps convexes et potentiels sph\'eriques,
 Mat.-Fys. Medd. Danske Vid. Selsk.,
 37(6),
 64,
 (1969)

\bibitem{BF-2019}
K. B\"or\"oczky, F. Fodor,
 The $L_p$ dual Minkowski problem for $p>1$ and $q>0$,
 J. Differential Equations,
 266(12),
 7980-8033,
 (2019)

\bibitem{Bianchi2019}
G. Bianchi, K. B\"or\"oczky, A. Colesanti, D. Yang,
 The $L_{p}$-Minkowski problem for $ -n< p < 1 $,
 Adv. Math.,
 341,
 493-535,
 (2019)%少了number

\bibitem{BIS-2023}
P. Bryan, M. Ivaki, J. Scheuer,
 Christoffel-Minkowski flows, Trans.
 Amer. Math. Soc.,
 376(4),
 2373-2393,
 (2023)

\bibitem{Boreczky2013}
K. B\"or\"oczky, E. Lutwak, D. Yang, G. Zhang,
 The logarithmic Minkowski problem,
 J. Amer. Math. Soc.,
 26(3),
 831-852,
 (2013)

\bibitem{Bereczky2017}
K. B\"or\"oczky, H. T. Trinh,
 The planar $L_{p}$-Minkowski problem for $0<p<1$,
 Adv. in Appl. Math.,
 87,
 58-81,
 (2017)%少了number

\bibitem{CNS-85}
L. Caffarelli, L. Nirenberg, J. Spruck,
 The Dirichlet problem for nonlinear second order elliptic equations, III: Functions of the eigenvalues of the Hessian,
 Acta Math.,
 155,
 261-301,
 (1985)%少了number

\bibitem{CH-25}
C. Cabezas-Moreno, J. Hu,
 The  $Lp$  dual Christoffel-Minkowski problem for  $1<p<q\leq k+1$  with  $1\leq k \leq n$,
 Calc. Var. Partial Differential Equations,
 64(7),
 229,
 (2025)

\bibitem{CC}
C. Chen,
 On the elementary symmetric functions, preprint.

\bibitem{CDH}
C. Chen, W. Dong, F. Han,
 Interior Hessian estimates for a class of Hessian type equations,
 Calc. Var. Partial Differential Equations,
 62(52),
 1-15,
 (2023)

\bibitem{CX-22}
C. Chen, L. Xu,
 The $L^p$ Minkowski type problem for a class of mixed Hessian quotient equations,
 Adv. Math.,
 411,
 108794,
 (2022)%少了number

\bibitem{CX-23}
C. Chen, L. Xu,
 Uniqueness of solutions to a class of mixed Hessian quotient type equations,
 J. Geom. Anal.,
 33(210),
 18,
 (2023)

\bibitem{CHZ-2019}
C. Chen, Y. Huang, Y. Zhao,
 Smooth solutions to the $L_p$ dual Minkowski problem,
 Math. Ann.,
 373(3-4),
 953-976,
 (2019)

\bibitem{CTX-20}
X. Chen, Q. Tu, N. Xiang,
 A class of Hessian quotient equations in Euclidean space,
 J. Differential Equations,
 269(2020),
 11172-11194,
 (2020)


\bibitem{CTX3}
X. Chen, Q. Tu, N. Xiang,
 The Dirichlet problem for a class of Hessian quotient equations on Riemannian manifolds,
 Int. Math. Res. Not.,
 2023(12),
 10013-10036,
 (2023)

\bibitem{CTX-25}
X. Chen, Q. Tu, N. Xiang,
 The $L_p$ dual Christoffel-Minkowski problem for the case $p\geq q$,
 arXiv: 2503.01454
 (2025)%有大问题不会改

\bibitem{CTX-21}
L. Chen,  Q. Tu, N. Xiang,
 Pogorelov type estimates for a class of Hessian quotient equations,
 J. Differential Equations,
 282,
 272-284,
 (2021)%少了number

\bibitem{Chen2017}
S. Chen, Q. Li, G. Zhu,
 On the $L_{p}$ Monge-Amp\'ere equation,
 J. Differential Equations,
 263(8),
 4997-5011,
 (2017)

\bibitem{Chen-Li-2021}
H. Chen, Q. Li,
 The $L_p$  dual Minkowski problem and related parabolic flows,
 J. Funct. Anal.,
 281(8),
 109139,
 (2021)

\bibitem{Ch-Yau-76}
S. Cheng, S. Yau,
 On the regularity of the solution of the $n$-dimensionalMinkowski problem,
 Comm. Pure Appl. Math.,
 29(5),
 495-516,
 (1976)

\bibitem{Cho-65}
E. Christoffel,
 Ueber die Bestimmung der Gestalt einer krummen Oberflche durch lokale Messungenauf derselben,
 J. Reine Angew. Math.,
 64,
 193-209,
 (1865)%少了number

\bibitem{Chou2006}
K. Chou, X. Wang,
 The $L_{p}$-Minkowski problem and the Minkowski problem in centroaffine geometry,
 Adv. Math.,
 205,
 33-83,
 (2006)%少了number

\bibitem{CJ-21}
J. Chu, H. Jiao,
 Curvature estimates for a class of Hessian type equations,
 Calc. Var. Partial Differential Equations,
 60(90),
 1-18,
 (2021)

\bibitem{Dong-23}
W. Dong,
 Curvature estimates for p-convex hypersurfaces of prescribed curvature,
 Rev. Mat. Iberoam.,
 39,
 1039-1058,
 (2023)%少了number

\bibitem{Dong-24}
W. Dong,
 The Dirichlet problem for prescribed curvature equations of  p-convex hypersurfaces,
 Manuscripta Math.,
 174,
 785-806,
 (2024)%少了number

\bibitem{D-23}
S. Dinew,
 Interior estimates for  p-plurisubharmonic functions,
 Indiana Univ. Math. J.,
 72(5),
 2025-2057,
 (2023)

\bibitem{DL-2023}
S. Ding, G. Li,
 A class of inverse curvature flows and $L_p$ dual Christoffel-Minkowski problem,
 Trans. Amer. Math. Soc.,
 376(1),
 697-752,
 (2023)

\bibitem{Fir-67}
W. Firey,
 The determination of convex bodies from their mean radius of curvature functions,
 Mathematika,
 14,
 1-13,
 (1967) %少了number

\bibitem{Fir-68}
W. Firey,
 Christoffel's problem for general convex bodies,
 Mathematika,
 15,
 7-21,
 (1968) %少了number

\bibitem{Fir-70}
W. Firey,
 Intermediate Christoffel-Minkowski problems for figures of revolution,
 Israel J. Math.,
 8,
 384-390,
 (1970)%少了number

\bibitem{G-84}
P. Gauduchon,
 La 1-forme de torsion d'une variete hermitienne compacte,
 Math. Ann.,
 267,
 495-518,
 (1984) %少了number

\bibitem{Gerhardt2006}
C. Gerhardt,
 Curvature problems, Series in Geometry and Topology,
 International Press of Boston Inc. Sommerville,
 39,
 (2006) %有大毛病不会改

\bibitem{GN-21}
B. Guan, X. Nie,
 Second order estimates for fully nonlinear elliptic equations with gradient terms on Hermitian manifolds,
 arXiv :2108 .03308%有大毛病不会改

\bibitem{GZT-25}
J. Gong, Z. Liu, Q. Tu,
 The Neumann problem for a class of Hessian quotient type equations,
 J. Differential Equations,
 431,
 113251,
 (2025) %少了number

\bibitem{P}
P. Guan,
 Topics in geometric fully nonlinear equations,
 Lecture Notes,
 (2002)%s少了好些东西

\bibitem{GM-2003}
P. Guan, X. Ma,
 The Christoffel-Minkowski problem. I. Convexity of solutions of aHessian equation,
 Invent. Math,
 151(3),
 553-577,
 (2003)

\bibitem{GL-99}
P. Guan, C. Lin,
 On equation $\det(u_{ij} + \delta_{ij} u) = u^p f$ on $S^n$,
 manuscript,
 (1999) %有大毛病不会改

\bibitem{GLM-2006}
P. Guan, C. Lin, X. Ma,
 The Christoffel-Minkowski problem. II.Weingarten curvature equations,
 Chinese Ann. Math. Ser. B,
 27(6),
 595-614,
 (2006)

\bibitem{GMZ-2006}
 P. Guan, X. Ma,  F. Zhou,
 The Christofel-Minkowski problem. III. Existence and convexity of admissible solutions,
 Comm. Pure Appl. Math.,
 59(9),
 1352-1376,
 (2006)

\bibitem{GX-18}
P. Guan, C. Xia,
 $L^p$ Christoffel-Minkowski problem: the case $1 < p < k+1$,
 Calc. Var. Partial Differential Equations,
 57 (2018): 69%有大毛病不会改

\bibitem{GCF1999}
Gauss curvature flow,
 the fate of the rolling stones,
 Invent. Math.,
 138,
 151-161,
 (1999)%少了numkber

\bibitem{HL-13}
F. Harvey, H. Lawson,
 p-convexity, p-plurisubharmonicity and the Levi problem,
 Indiana Univ. Math. J.,
 62,
 149-169,
 (2013)%少了numkber（查不到）

\bibitem{HMS-04}
C. Hu, X. Ma, C. Shen,
 On the Christoffel-Minkowski problem of Firey's $p$-sum,
 Calc. Var. Partial Differential Equations,
 21(2),
 137-155,
 (2004)

\bibitem{HZ-2018}
Y. Huang, Y. Zhao,
 On the $L_p$ dual Minkowski problem,
 Adv. Math.,
 332,
 57-84,
 (2018)%少了numkber（查不到）

\bibitem{IVAKI-2019}
M. Ivaki,
 Deforming a hypersurface by principal radii of curvature and support function,
 Calc. Var. Partial Differential Equations,
 58(1),
 1,
 (2019)

\bibitem{Lieberman1996}
G. Lieberman,
 Second order parabolic differential equations,
 World Scientific,
 (1996)%缺斤少两（查不到）

\bibitem{Lu2013}
J. Lu, X. Wang,
 Rotationally symmetric solutions to the $L_{p}$-Minkowski problem,
 J. Differential Equations,
 254(3),
 983-1005,
 (2013)

\bibitem{Lut-1993}
E. Lutwak,
 The Brunn-Minkowski-Firey theory. I. Mixed volumes and the Minkowski problem,
 J. Differential Geom.,
 38(1),
 131-150,
 (1993)

\bibitem{Lutwak2004}
E. Lutwak, D. Yang, G. Zhang,
 On the $L_{p}$-Minkowski problem, Trans.
 Amer. Math. Soc.,
 356(11),
 4359-4370,
 (2004)

\bibitem{LYZ-2018-1}
E. Lutwak, D. Yang, G. Zhang,
 $L_p$ dual curvature measures,
 Adv. Math.,
 329,
 85-132,
 (2018)%少了numkber

\bibitem{Min-97}
H. Minkowski,
 Allgemeine Lehrs\"atze \"uber die convexen Polyeder,
 Nachr. Ges. Wiss. Gttingen,
 198-219,
 (1897)%少了numkber和volume（查不到）

\bibitem{Min-03}
H. Minkowski,
 Volumen und Oberfl\"ache,
 Math. Ann.,
 57(4),
 447-495,
 (1903)

\bibitem{Nir-53}
L. Nirenberg,
 The Weyl and Minkowski problems in differential geometry in the large,
 Comm. Pure Appl. Math.,
 6(3),
 337-394,
 (1953)

\bibitem{Pog-78}
A. Pogorelov,
 The Minkowski multidimensional problem,
 translated from the Russian by Vladimir Oliker,Scripta Series in Mathematics, Winston, Washington, DC %真不会改
 (1978)

\bibitem{ShC-2020}
W. Sheng, C. Yi,
 A class of anisotropic expanding curvature flows,
 Discrete Contin. Dyn. Syst.,
 40(4),
 2017-2035,
 (2020)

\bibitem{STW-17}
G. Sz\'ekelyhidi, V. Tosatti, B. Weinkove,
 Gauduchon metrics with prescribed volume form,
 Acta Math.,
 219,
 181-211,
 (2017)

\bibitem{TW-17}
V. Tosatti, B. Weinkove,
 The Monge-Amp\'ere equation for $(n-1)$-plurisubharmonic functions on a compact K\"ahler manifold,
 J. Amer. Math. Soc.,
 30(2),
 311-346,
 (2017)

\bibitem{Zhu2014}
G. Zhu,
 The logarithmic Minkowski problem for polytopes,
 Adv. Math.,
 262,
 909-931,
 (2014)%少了numkber（查不到）

\bibitem{Zhu2015a}
G. Zhu,
 The $L_{p}$-Minkowski problem for polytopes for $0 < p < 1$,
 J. Funct. Anal.,
 269(4),
 1070-1094,
 (2015)

\bibitem{Zhu2015b}
G. Zhu,
 The centro-affine Minkowski problem for polytopes,
 J. Differential Geom.,
 101(1),
 159-174,
 (2015)

\bibitem{Z-24}
J. Zhou,
 Curvature estimates for a class of Hessian quotient type curvature equations,
 Calc. Var. Partial Differential Equations,
 63(4),
 88,
 (2024)



\end{thebibliography}
\end{document}